\documentclass[11pt]{article}

\usepackage{amsmath, amsthm, amssymb}
\usepackage{mathtools}
\usepackage{enumitem}

\usepackage{tikz}
\usetikzlibrary{arrows}

\usepackage[colorlinks=true,linkcolor=blue,citecolor=blue,urlcolor=blue]{hyperref}


\numberwithin{equation}{section}

\newtheorem{theorem}{Theorem}[section]
\newtheorem{corollary}[theorem]{Corollary}
\newtheorem{lemma}[theorem]{Lemma}
\newtheorem{question}[theorem]{Question}

\newtheorem{conjecture}[theorem]{Conjecture}

\theoremstyle{definition}
\newtheorem{definition}[theorem]{Definition}

\newtheorem{obs}[theorem]{Observation}

\theoremstyle{remark}
\newtheorem{remark}[theorem]{Remark}

\newenvironment{funding}{\section*{Funding}}{}


\newcommand\Sc{{\mathcal{S}}}
\newcommand\Hc{{\mathcal{H}}}

\newcommand\Fc{{\mathcal{F}}}

\newcommand{\lw}{1.0}

\DeclarePairedDelimiter{\floor}{\lfloor}{\rfloor}

\newcommand\blfootnote[1]{%
  \begingroup
  \renewcommand\thefootnote{}\footnote{#1}%
  \addtocounter{footnote}{-1}%
  \endgroup
}

\begin{document}

\title{Three-chromatic geometric hypergraphs%
\blfootnote{The extended abstract of this paper appeared in the proceedings of SoCG~2022.}}

\author{
Gábor Damásdi\thanks{ELTE Eötvös Loránd University and Alfréd Rényi Institute of Mathematics, Budapest, Hungary.
\texttt{damasdigabor@caesar.elte.hu}}
\and
Dömötör Pálvölgyi\thanks{ELTE Eötvös Loránd University and Alfréd Rényi Institute of Mathematics, Budapest, Hungary.
\texttt{domotor.palvolgyi@ttk.elte.hu}}
}

\date{}

\maketitle

\begin{abstract}
We prove that for any planar convex body $C$ there is a positive integer $m$ with the property that any finite point set $P$ in the plane can be three-colored such that there is no translate of $C$ containing at least $m$ points of $P$, all of the same color. As a part of the proof, we show a strengthening of the Erdős-Sands-Sauer-Woodrow conjecture. Surprisingly, the proof also relies on the two dimensional case of the Illumination conjecture. 
\end{abstract}

\maketitle


\section{Introduction}


Our main result is the following.

\begin{theorem}\label{thm:general_three_col}
For any planar convex body $C$ there is a positive integer $m=m(C)$ such that any finite point set $P$ in the plane can be three-colored in a way that there is no translate of $C$ containing at least $m$ points of $P$, all of the same color.
\end{theorem}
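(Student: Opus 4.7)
The plan is to combine the two ingredients advertised in the abstract: a strengthening of the Erdős--Sands--Sauer--Woodrow (ESSW) conjecture and the planar case of the Illumination conjecture. First, I would dualize the problem. For a fixed convex body $C$ and finite point set $P \subset \mathbb{R}^2$, consider the hypergraph $\Hc$ on vertex set $P$ whose edges are the sets $P \cap (c + C)$ as $c$ ranges over $\mathbb{R}^2$. It then suffices to produce a constant $m = m(C)$ and a 3-coloring of $P$ such that no edge of $\Hc$ of size at least $m$ is monochromatic.

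The next step is a structural decomposition of translate positions via Illumination. For a planar convex body the boundary $\partial C$ is illuminated by a bounded number of directions: at most four in general, at most three when $C$ is not a parallelogram. This yields a partition of the plane of relative positions into a bounded collection of cones so that, for any two points $p, q \in P$ lying together in some translate of $C$, the difference $q - p$ falls in exactly one cone. For each cone one obtains a linear order on $P$, and consequently a digraph $D$ on $P$ whose arcs carry a bounded palette of colors.

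I would then apply the strengthened ESSW statement to $D$ to extract a dominating set $S_1 \subseteq P$ whose monochromatic out-neighbourhoods of bounded depth meet every heavy translate of $C$. Color $S_1$ with color $1$, iterate on $P \setminus S_1$ to obtain $S_2$ (color $2$), and color the remainder with color $3$. The bounded-depth property is what forces the number of iterations to be a constant rather than $\Theta(\log|P|)$, and in turn guarantees the existence of an absolute $m(C)$ for which no heavy translate survives monochromatically after two rounds.

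The main obstacle, I expect, lies in establishing the required strengthening of ESSW. The classical Sands--Sauer--Woodrow theorem yields only an $O(\log n)$-round domination, which would force $\Theta(\log n)$ colors rather than a constant number; one must therefore prove that the digraphs arising from translates of a convex body admit \emph{constant}-depth dominating sets, uniformly across all Illumination directions. A secondary obstacle is the parallelogram case, where the Illumination number is four rather than three, so a direct application would yield four colors; this case likely requires separate treatment, perhaps by exploiting that parallelograms tile the plane and admit a transparent grid decomposition that can be folded back into the three-color scheme.
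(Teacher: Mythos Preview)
Your proposal has a genuine gap in the coloring scheme. You extract a dominating set $S_1$, give it color~1, iterate on $P\setminus S_1$ to get $S_2$ with color~2, and assign color~3 to everything else. But nothing prevents a translate of $C$ from containing arbitrarily many points of the remainder and none of $S_1\cup S_2$: domination in the cone quasi-orders only says that every remaining point $p$ has an arc \emph{into} it from some $s\in S_1$, i.e., $s$ lies in a cone with apex $p$. It does \emph{not} say that every heavy translate contains a point of $S_1$; a translate touching $p$ on the ``wrong'' side of the boundary need not contain that $s$. So the color-3 class is completely uncontrolled in your outline.

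The paper's argument is structured differently, and uses a third ingredient you do not mention. After a grid localization (so only a short arc of $\partial C$ is relevant in each cell), the strengthened ESSW (Theorem~\ref{thm:multi_essw_new} with $k=3$, $l=2$) yields six small \emph{pairwise disjoint} sets $S_i^j$, two per cone direction; these are precolored with colors~1 and~2. The remaining points split into $P_1,P_2,P_3$ by which direction dominates them twice. For $p\in P_i$, any translate whose boundary meets the cell on the arc corresponding to direction~$i$ automatically contains a point of $S_i^1$ (color~1) and of $S_i^2$ (color~2), via property~(A). But the translates meeting the cell on the other two arcs are \emph{not} handled by ESSW at all: the key observation is that each of those two families forms a \emph{pseudohalfplane} arrangement over $P_i$, and the Union Lemma (Lemma~\ref{lem:combine}) together with the polychromatic result for pseudohalfplanes (Theorem~\ref{thm:pseudohalfplane}) then gives a proper 3-coloring of $P_i$ good for both families simultaneously (Corollary~\ref{cor:pseudohalfplane}). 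So ESSW neutralizes one of three translate types and pseudohalfplane coloring does the remaining work; your plan tries to make ESSW do everything.

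Two smaller corrections. The Bousquet--Lochet--Thomass\'e theorem already gives a constant-size (not $O(\log n)$) dominating set; the strengthening needed here is the \emph{disjointness} of the $S_i^j$, not a depth bound. And the parallelogram case is disposed of by citing the known two-color result for translates of symmetric polygons; no separate grid-folding argument is required.
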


This result closes a long line of research about coloring points with respect to planar range spaces that consist of translates of a fixed set, a problem that was initiated by Pach over forty years ago \cite{Pach80}.
In general, a pair $(P, \Sc)$, where $P$ is a set of points in the plane and $\Sc$ is a family of subsets of the plane, called the \emph{range space}, defines a \emph{primal} hypergraph $\Hc(P,\Sc)$ whose vertex set is $P$, and for each $S\in\Sc$ we add the edge $S\cap P$ to the hypergraph.
Given any hypergraph $\Hc$, a planar realization of $\Hc$ is defined as a pair $(P, \Sc)$ for which $\Hc(P,\Sc)$ is isomorphic to $\Hc$.
If $\Hc$ can be realized with some pair $(P, \Sc)$ where $\Sc$ is from some family $\Fc$, then we say that $\Hc$ is realizable with $\Fc$.
The hypergraph, where the elements of the range space $\Sc$ are the vertices and the points $P$ define the edges such that $\{S\in\Sc\mid p\in S\}$ is an edge for every $p\in P$, is known as the \emph{dual} hypergraph of $\Hc(P,\Sc)$ and is denoted by $\Hc(\Sc,P)$.
If $\Hc=\Hc(\Sc,P)$ where $\Sc$ is from some family $\Fc$, then we say that $\Hc$ has a dual realization with $\Fc$.
Pach observed \cite{Pach80,surveycd} that if $\Fc$ is the family of translates of some set, then $\Hc$ has a dual realization with $\Fc$ if and only if $\Hc$ has a (primal) realization with $\Fc$.

Pach proposed to study the chromatic number of hypergraphs realizable with different geometric families $\Fc$.
It is important to distinguish between two types of hypergraph colorings that we will use, the \emph{proper} coloring and the \emph{polychromatic} coloring. 

\begin{definition}
A hypergraph is \emph{properly $k$-colorable} if its vertices can be colored with $k$ colors so that each edge contains points from at least two color classes. Such a coloring is called a \emph{proper $k$-coloring}.
If a hypergraph has a proper $k$-coloring but not a proper $(k-1)$-coloring, then it is called \emph{$k$-chromatic}.

A hypergraph is \emph{polychromatic $k$-colorable} if its vertices can be colored with $k$ colors so that each edge contains points from each color class. Such a coloring is called a \emph{polychromatic $k$-coloring}.
\end{definition}

Note that for a polychromatic $k$-coloring to exist, it is necessary that each edge of the underlying hypergraph has at least $k$ vertices.
More generally, we say that a hypergraph is \emph{$m$-heavy} if each of its edges has at least $m$ vertices.

The main question that Pach raised can be rephrased for translates as follows.

\begin{question}
    For which planar families $\Fc$ is there an $m_k=m(\Fc,k)$ such that any $m_k$-heavy hypergraph realizable with $\Fc$ has a proper/polychromatic $k$-coloring? 
\end{question}

Initially, this question has been mainly studied for polychromatic $k$-colorings (known in case of a dual range space as \emph{cover-decomposition} problem), and it was shown that such an $m_k$ exists if $\Fc$ is the family of translates of some convex polygon \cite{Pach86,TT07,PT10}, or the family of all halfplanes \cite{wcf2,MR2844088}, or the homothetic\footnote{A \emph{homothetic copy}, or \emph{homothet}, is a scaled and translated (but non-rotated) copy of a set. We always require the scaling factor to be positive. Note that this is sometimes called a positive homothet.} copies of a triangle \cite{octants} or of a square \cite{homotsquare}, while it was also shown that even $m_2$ does not exist if $\Fc$ is the family of translates of some appropriate concave polygon \cite{MR2364757,MR2679054} or any body\footnote{By \emph{body}, we always mean a compact subset of the plane with a non-empty interior, though our results (and most of the results mentioned) also hold for sets that are unbounded, or that contain an arbitrary part of their boundary, and are thus neither open, nor closed. This is because a realization of a hypergraph can be perturbed slightly to move the points off from the boundaries of the sets realizing the respective edges of the hypergraph.} with a smooth boundary \cite{unsplittable}.
It was also shown that there is no $m_k$ for proper $k$-colorings if $\Fc$ is the family of all lines \cite{MR2364757} or all axis-parallel rectangles \cite{Chen}; for these families, the same holds in case of dual realizations \cite{MR2364757,PT08}.
For homothets of convex polygons other than triangles, it is known that there is no $m_2$ for dual realizations \cite{kovacs}, unlike for primal realizations.
Higher dimensional variants \cite{octants,CKMU13} and improved bounds for $m_k$ have been also studied \cite{Alou,MR2812512,MR3151767,MR3216669,MR3126347,CKMPUV20}.
For other results, see also the decade old survey \cite{surveycd}, or the up-to-date website \url{https://coge.elte.hu/cogezoo.html}.

If $\Fc$ is the translates or homothets of some planar convex body, it is an easy consequence of the properties of generalized Delaunay-triangulations and the
Four Color Theorem that any hypergraph realizable with $\Fc$ is proper 4-colorable if every edge
contains at least two vertices.
We have recently shown that this cannot be improved for homothets.

\begin{theorem}[Dam\'asdi, Pálvölgyi \cite{fourchromatic}]
 Let $C$ be any convex body in the plane that has two parallel supporting lines such that $C$ is strictly convex in some neighborhood of the two points of tangencies. For any positive integer $m$, there exists a 4-chromatic $m$-uniform hypergraph that is realizable with homothets of $C$.
\end{theorem}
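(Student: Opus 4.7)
The plan is to combine a standard 3-chromatic ``interval-stacking'' construction on one side of~$C$ with a higher-level ``scale-changing'' substitution on the other side. The two-sided strict convexity of~$C$ is the essential geometric input that makes both scales realizable with homothets of the same body, and the reason the hypothesis cannot be relaxed to one-sided strict convexity.

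\emph{Step 1 (normalization and local primitive).} After an affine transformation, I would assume the two parallel supporting lines are horizontal with tangency points $a$ (top) and $b$ (bottom), both lying in strictly convex arcs of $\partial C$. From strict convexity at~$a$ one extracts the following primitive: for any short horizontal segment~$\sigma$, any finite $Q\subset\sigma$, and any sub-interval $I\subset Q$, there is a small homothet $h$ of~$C$ with apex just above~$\sigma$ such that $h\cap Q=I$. Strict convexity at~$b$ gives the analogous primitive from below. Thus arbitrary interval hypergraphs on short horizontal point sets are realizable with homothets of~$C$, from either side independently.

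\emph{Step 2 (3-chromatic $m$-uniform base).} Using only the top primitive, I would build (or invoke the standard construction of) an $m$-uniform 3-chromatic hypergraph $H_1$ realizable with homothets of~$C$. This is the Pach-type ``stacked interval'' construction, which needs only strict convexity at one point of the boundary, and gives a hypergraph in which every proper 2-coloring leaves some $m$-edge monochromatic.

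\emph{Step 3 (upgrade from 3 to 4).} The crux is a substitution that uses the \emph{bottom} primitive to glue many copies of $H_1$ together at a completely different scale. Concretely: take $N$ rescaled, well-separated copies of $H_1$, lay them along a short horizontal segment at a tiny scale, and add new ``global'' $m$-edges realized by larger homothets of~$C$ built from the bottom primitive, each one cutting through a prescribed subset of the small copies. Since $H_1$ admits only finitely many proper 3-coloring patterns (up to relabelling, say $t$ of them), choosing $N\gg t$ and designing the global edges to be indexed by the possible coloring patterns allows a pigeon-hole argument: in any proper 3-coloring of the whole construction, some two small copies must be coloured by the same pattern, and one of the global edges is then forced to be monochromatic. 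Thus no proper 3-coloring exists, and the chromatic number jumps from~3 to~4.

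\emph{Main obstacle.} Step~3 is the hard part, and it is purely geometric: the global edges must cut out exactly the planned subset from each small copy while completely avoiding the non-selected copies, and at the same time the local edges of each small copy must stay inside their copy. Decoupling the two scales is precisely what two-sided strict convexity provides---the top and bottom arcs give two independent interval primitives whose scales can be chosen orders of magnitude apart, so the global ``bottom-apex'' edges and the local ``top-apex'' edges do not interfere. I also expect that making every global edge have cardinality exactly~$m$ (rather than merely at least~$m$) will require a small padding trick, most naturally by enlarging each small copy of $H_1$ by a controlled number of ``filler'' points that can be absorbed into the global edges without affecting their coloring constraints.
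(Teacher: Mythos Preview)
This theorem is only \emph{cited} in the present paper (from \cite{fourchromatic}); no proof of it appears here, so there is nothing in this paper to compare your attempt against directly. That said, your plan has a genuine gap that would prevent it from succeeding regardless of what the cited paper does.

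The problem is in Step~3. Your pigeonhole argument asserts that once two (or many) small copies of $H_1$ receive the same proper 3-coloring pattern, ``one of the global edges is then forced to be monochromatic.'' This inference does not follow. A global edge built from the bottom primitive is an \emph{interval} in the line of copies: it picks up a contiguous block of copies (or a contiguous block of vertex positions), not an arbitrary $m$-subset. The copies that happen to share a coloring pattern need not be contiguous, and even if you take $N$ so large that some pattern repeats $m$ times, nothing forces those $m$ repetitions to lie in any single interval you chose in advance. Conversely, if you try to add a global edge for every $m$-subset of copies, those edges are no longer realizable by the interval primitive. Finally, even restricting to a single vertex position across all copies and asking for $m$ consecutive copies with that position monochromatic fails: the periodic coloring $1,2,3,1,2,3,\ldots$ of those positions avoids any two consecutive equal colors. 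So the combinatorial upgrade from 3-chromatic to 4-chromatic cannot be obtained by a one-shot pigeonhole over coloring patterns of $H_1$.

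The construction that actually works (and that the cited paper carries out) is a direct three-parameter analogue of the two-parameter hypergraph $\mathcal{H}(k,l)$ described in Appendix~\ref{sec:halfdisk} of this paper: one defines $\mathcal{H}(k_1,k_2,k_3)$ recursively with a root $p$ and three sub-copies $\mathcal{H}(k_1{-}1,k_2,k_3)$, $\mathcal{H}(k_1,k_2{-}1,k_3)$, $\mathcal{H}(k_1,k_2,k_3{-}1)$, with three edge types, so that any 3-coloring produces a monochromatic edge of the matching type. The geometric realization then needs \emph{three} independent ways to attach $p$ to the edge families of the three sub-copies; the two strictly convex tangency arcs supply two of them (exactly your Step~1 primitives), and the scaling freedom of homothets supplies the third. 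Your Steps~1--2 are on the right track and capture the first two primitives, but Step~3 should be replaced by this recursive scheme rather than a pigeonhole substitution.
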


For translates, we recall the following result.
 
\begin{theorem}[Pach, Pálvölgyi \cite{unsplittable}]\label{thm:unsplittable}
 Let $C$ be any convex body in the plane that has two\footnote{In the published version of our paper we have incorrectly claimed that we can prove that one such line is sufficient; however, that proof was incorrect.} parallel supporting lines such that $C$ is strictly convex in some neighborhood of the two points of tangencies. For any positive integer $m$, there exists a 3-chromatic $m$-uniform hypergraph that is realizable with translates of $C$.
\end{theorem}

This left only the following question open: Is it true for any planar convex body $C$ that there is a positive integer $m$ such that no 4-chromatic $m$-uniform hypergraph is realizable with translates of $C$?
Our Theorem \ref{thm:general_three_col} answers this question affirmatively for all $C$ by showing that all realizable $m$-heavy hypergraphs are three-colorable for some $m$.
This has been hitherto known to hold only when $C$ is a polygon (in which case 2 colors suffice \cite{PT10}, and 3 colors are known to be enough even for homothets \cite{3propercol}) and pseudodisk families that intersect in a common point \cite{MR4169259} (which generalizes the case when $C$ is unbounded, in which case 2 colors suffice \cite{unsplittable}).

The proof of Theorem \ref{thm:general_three_col} relies on a surprising connection with two other famous results, the solution of the two dimensional case of the Illumination conjecture \cite{MR76368}, and a recent solution of the Erdős-Sands-Sauer-Woodrow conjecture by Bousquet, Lochet and Thomassé~\cite{esswproof}. 
In fact, we need a generalization of the latter result, which we prove with the addition of one more trick to their method; this can be of independent interest.

Note that the extended abstract of our first proof attempt appeared recently in the proceedings of EuroComb 2021 \cite{threechromaticdisk}.
That proof did not use the above two results, however, it only worked when $C$ was a disk, and while the generalization to other convex bodies with a smooth boundary seemed feasible, we saw no way to extend it to arbitrary convex bodies.

\smallskip

The rest of the paper is organized as follows.\\
In Section \ref{sec:tools} we present the three main ingredients of our proof:
\begin{itemize}
    \item the Union Lemma (Section \ref{sec:unionlemma}),
    \item the Erdős-Sands-Sauer-Woodrow conjecture (Section \ref{sec:essw}),
    \item the Illumination conjecture (Section \ref{sec:illum}), which is a theorem of Levi in the plane.
\end{itemize}
In Section \ref{sec:proof} we give the detailed proof of Theorem \ref{thm:general_three_col}.\\
In the Section \ref{sec:esswproof} we prove our generalization of the Bousquet-Lochet-Thomassé theorem.\\
Finally, in Section \ref{sec:open}, we pose some problems left open.

\section{Tools}\label{sec:tools}
\subsection{Union Lemma}\label{sec:unionlemma}

Polychromatic colorability is a much stronger property than proper colorability. Any polychromatic $k$-colorable hypergraph is proper $2$-colorable. We generalize this trivial observation to the following statement about unions of polychromatic $k$-colorable hypergraphs.    
 
\begin{lemma}[Union Lemma]\label{lem:combine} Let $\Hc_1=(V,E_1),\dots, \Hc_{k-1}=(V,E_{k-1})$ be hypergraphs on a common vertex set $V$.  If $\Hc_1,\dots, \Hc_{k-1}$ are polychromatic $k$-colorable, then the hypergraph $\bigcup\limits_{i=1}^{k-1} \Hc_i=(V,\bigcup\limits_{i=1}^{k-1} E_i)$ is proper $k$-colorable.
\end{lemma}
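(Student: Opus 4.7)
The plan is to produce a single $k$-coloring $c\colon V\to[k]$ by combining the given polychromatic colorings $c_1,\dots,c_{k-1}$ in a pointwise, essentially trivial way. The key observation is that each vertex $v$ carries a tuple $(c_1(v),\dots,c_{k-1}(v))\in[k]^{k-1}$ of length only $k-1$, whereas the palette has $k$ colors; so at every vertex there is a ``forbidden'' value missing from its tuple.

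Concretely, I would define
\[
  c(v)\ \in\ [k]\setminus\{c_1(v),c_2(v),\dots,c_{k-1}(v)\},
\]
choosing any element of this set (it is nonempty because $|\{c_1(v),\dots,c_{k-1}(v)\}|\le k-1<k$). Then I would verify properness by contradiction: suppose some edge $e\in E_i$ is monochromatic under $c$, say $c(v)=\ell$ for every $v\in e$. By the defining property of $c$, this forces $c_i(v)\ne\ell$ for every $v\in e$. But $c_i$ is a polychromatic $k$-coloring of $\Hc_i$, so $c_i$ must attain every color in $[k]$ on the edge $e$, including the color $\ell$, contradiction. Hence no edge of any $\Hc_i$ is monochromatic under $c$, so $c$ is a proper $k$-coloring of $\bigcup_{i=1}^{k-1}\Hc_i$.

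There is no real obstacle: the argument is essentially a pigeonhole at each vertex together with the polychromatic property applied once. The only thing worth emphasizing in the write-up is the asymmetry that makes the count work, namely that there are exactly $k-1$ hypergraphs and $k$ colors, so that the forbidden-color set at each vertex is guaranteed to be nonempty; this is precisely why the statement is tight in its form and why the same argument cannot deduce proper $k$-colorability from $k$ polychromatic $k$-colorable hypergraphs.
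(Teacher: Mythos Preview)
Your proof is correct and is essentially identical to the paper's own argument: both define $c(v)$ to be any color in $[k]\setminus\{c_1(v),\dots,c_{k-1}(v)\}$ and then use the polychromatic property of $c_i$ on an edge $e\in E_i$ to rule out monochromaticity. The only cosmetic difference is that the paper phrases the verification directly (for each edge and each color $j$, the vertex $v\in e$ with $c_i(v)=j$ satisfies $c(v)\ne j$), whereas you phrase it as a contradiction.
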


\begin{proof}
Let $c_i:V\rightarrow \{1,\dots,k \}$ be the polychromatic coloring of the $i$-th hypergraph. Using the $c_i$-s we construct a proper coloring $c:V\rightarrow \{1,\dots,k \}$ for the union.
 Choose $c(v)\in \{1,\ldots,k\}$ such that it differs from each $c_i(v)$.
 We claim that $c$ is a proper $k$-coloring of $\bigcup\limits_{i=1}^{k-1} \Hc_i$.
 To prove this, it is enough to show that for every edge $H\in\Hc_i$ and for every color $j\in\{1,\ldots,k\}$, there is a $v\in H$ such that $c(v)\ne j$. 
 We can pick $v\in H$ for which $c_i(v)=j$.
 This finishes the proof.
\end{proof}
  
Lemma \ref{lem:combine} is sharp in the sense that for every $k$ there are $k-1$ hypergraphs such that each is polychromatic $k$-colorable, but their union is not properly $(k-1)$-colorable. Take for example the following $k-1$ hypergraphs. The vertex set $V$ of each hypergraph is the grid $ \{1,\dots,k \}^{k-1}$ and for each $i\in  \{1,\dots,k-1 \}$ the edge set $E_i$ is those subsets of $V$  where each one of $1, \dots, k$ appears in the $i$-th coordinate for some element of the subset. Clearly, $(V,E_i)$ is polychromatic $k$-colorable: simply color the vertices according to their $i$-th coordinate.  Suppose that their union has a proper $(k-1)$-coloring. The $i$-th color class cannot form an edge in $E_i$ since that would be a monochromatic edge. Therefore, there must be a number $l_i\in  \{1,\dots,k-1 \}$ such that no vertex of the $i$-th color has $l_i$ in its $i$-th coordinate. But this is a contradiction, as the vertex $(l_1,\dots, l_{k-1})$ does not have a color. This shows the sharpness of the lemma.\\

We will apply the Union Lemma combined with the theorem below.
A \emph{pseudoline arrangement} is a collection of simple curves, each of which splits $\mathbb R^2$ into two unbounded parts, such that any two curves intersect at most once.
A \emph{pseudohalfplane} is the region on one side of a pseudoline in such an arrangement.
For hypergraphs realizible by pseudohalfplanes the following was proved, generalizing a result of Smorodinsky and Yuditsky \cite{MR2844088} about halfplanes. 

\begin{theorem}[Keszegh-P\'alv\"olgyi \cite{abafree}]\label{thm:pseudohalfplane}
 Any $(2k-1)$-heavy hypergraph realizable by pseudohalfplanes is polychromatic $k$-colorable, i.e., given a finite set of points and a pseudohalfplane arrangement in the plane, the points can be $k$-colored such that every pseudohalfplane that contains at least $2k-1$ points contains all $k$ colors.
\end{theorem}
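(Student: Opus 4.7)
My plan is to prove Theorem \ref{thm:pseudohalfplane} by induction on $k$. The base case $k=1$ is trivial. For the inductive step, I would try to extract one new color class $S \subseteq P$ that serves as a \emph{$(1,2)$-shallow hitting set} for the given pseudohalfplane hypergraph $\Hc$: every $(2k-1)$-heavy edge intersects $S$ in at least one and at most two points.

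Given such an $S$, color $S$ with color $k$. The sub-hypergraph induced on $P \setminus S$ is again realizable by pseudohalfplanes (one simply keeps the same arrangement), and every originally $(2k-1)$-heavy edge now retains at least $(2k-1)-2 = 2(k-1)-1$ points in $P \setminus S$. By the inductive hypothesis, $P \setminus S$ admits a polychromatic $(k-1)$-coloring so that each such remainder receives all colors $1,\dots,k-1$. Combined with color $k$ on $S$, this produces the desired polychromatic $k$-coloring.

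The substantive task is producing $S$. Here I would exploit the pseudoconvex-position structure of pseudohalfplane arrangements: the points of $P$ can be placed in a natural cyclic order (essentially the pseudo-convex-hull order) such that every edge of $\Hc$ meets this order in a contiguous arc. I would then build $S$ layer by layer, peeling outward layers of this cyclic structure, and within each layer apply a greedy or matching-based selection that respects the two bounds. An edge whose arc portion on the outermost layer is too short drops into the next layer, where the selection continues.

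The main obstacle is enforcing both bounds simultaneously. The hitting property demands that $S$ be dense enough to touch every long arc, while shallowness demands that $S$ be sparse enough that no long arc oversamples. Naively alternating pseudo-convex-hull points fails for long arcs, since a $(2k-1)$-heavy edge spanning many hull points would collect many points of $S$. I expect the resolution to require a matching-based selection within each layer, together with a careful layering argument guaranteeing that each edge is ``resolved'' at a specific layer without accumulating excess points of $S$ from deeper layers. This is likely where an ABA-free hypergraph formalism — which precisely captures the combinatorics of pseudohalfplane hypergraphs — becomes essential, allowing one to replace geometric reasoning about convex hulls by purely order-theoretic arguments.
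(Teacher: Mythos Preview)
This theorem is not proved in the present paper; it is quoted from \cite{abafree} and used as a black box. So there is no ``paper's own proof'' to compare against. That said, your high-level strategy---find a shallow hitting set $S$ (each large edge meets $S$ at least once and at most twice), give $S$ a new color, and induct---is exactly the scheme used in \cite{abafree}, so the outer induction is sound.

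The gap is in how you propose to build $S$. Your central structural claim, that the points admit a cyclic (pseudo-convex-hull) order in which every pseudohalfplane edge is a contiguous arc, is false once $P$ is not in convex position. Already for four points with one inside the triangle of the other three there are halfplane edges that are not arcs in any cyclic order on the four points. Consequently the whole ``peel layers and pick along arcs'' picture does not model the hypergraph, and the greedy/matching selection you outline has no interval structure to act on.

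What actually works in \cite{abafree} is different: one first splits the pseudohalfplanes into two families (``upsets'' and ``downsets'') and orders the points by a \emph{linear} order so that each family is \emph{ABA-free}---there is no pattern $a<b<a'$ with $a,a'\in A\setminus B$ and $b\in B\setminus A$. ABA-free is much weaker than ``interval'' and is the correct abstraction; the existence of a $2$-shallow hitting set is then proved for ABA-free hypergraphs by an argument on extremal (``unskippable'') vertices, not by convex-hull peeling. You hint at ABA-free in your last sentence, but you should treat it as the engine of the proof, not as a patch for the arc picture: drop the cyclic/arc intuition entirely and work directly with the ABA-free linear order on each side.
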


Combining Theorem \ref{thm:pseudohalfplane} with Lemma \ref{lem:combine} for $k=3$, we obtain the following.

\begin{corollary}\label{cor:pseudohalfplane}
 Any $5$-heavy hypergraph realizable by two pseudohalfplane families is proper $3$-colorable, i.e., given a finite set of points and two different pseudohalfplane arrangements in the plane, the points can be $3$-colored such that every pseudohalfplane that contains at least $5$ points contains two differently colored points.
\end{corollary}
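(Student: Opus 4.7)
The plan is to apply the Union Lemma directly, with the two constituent hypergraphs being the ones arising from each of the two pseudohalfplane families separately.

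First I would set up the notation. Let $P$ be the given finite point set and let $\mathcal{F}_1, \mathcal{F}_2$ be the two pseudohalfplane arrangements. For $i=1,2$, define $\mathcal{F}_i^{\geq 5}\subseteq \mathcal{F}_i$ as the subfamily of pseudohalfplanes that contain at least $5$ points of $P$, and let $\mathcal{H}_i=\mathcal{H}(P,\mathcal{F}_i^{\geq 5})$ be the primal hypergraph. By construction, each $\mathcal{H}_i$ is $5$-heavy and realized by a pseudohalfplane family, namely $\mathcal{F}_i^{\geq 5}$ itself.

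Next I would invoke Theorem \ref{thm:pseudohalfplane} with $k=3$: since $2k-1=5$, each $\mathcal{H}_i$ is polychromatic $3$-colorable. Now the Union Lemma (Lemma \ref{lem:combine}) with $k=3$ requires exactly $k-1=2$ polychromatic $3$-colorable hypergraphs on a common vertex set, which is precisely what we have. Applying it yields a proper $3$-coloring of $\mathcal{H}_1\cup\mathcal{H}_2$ on the vertex set $P$.

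Finally I would observe that this coloring does what we want: any pseudohalfplane from either $\mathcal{F}_1$ or $\mathcal{F}_2$ that contains at least $5$ points of $P$ corresponds to an edge of $\mathcal{H}_1\cup\mathcal{H}_2$, and a proper $3$-coloring assigns two different colors to some two of its vertices. There is essentially no obstacle here—the argument is a direct composition of the two cited results, and the role of the assumption ``$5$-heavy'' is exactly to match the $2k-1$ threshold of Theorem \ref{thm:pseudohalfplane} for $k=3$. The only thing to be slightly careful about is to restrict each $\mathcal{F}_i$ to its $5$-heavy sub-arrangement before invoking the polychromatic result, so that the hypotheses of Lemma \ref{lem:combine} are formally met.
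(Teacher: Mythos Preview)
Your proof is correct and follows exactly the paper's approach: the paper simply states that the corollary is obtained by combining Theorem~\ref{thm:pseudohalfplane} with Lemma~\ref{lem:combine} for $k=3$, which is precisely what you spell out.
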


\subsection{Erdős-Sands-Sauer-Woodrow conjecture}\label{sec:essw}
Given a quasi-order\footnote{A quasi-order $\prec$ is a reflexive and transitive relation, but it is not required to be antisymmetric, so $p\prec q\prec p$ is allowed, unlike for partial orders.} $\prec$ on a set $V$, we interpret it as a digraph $D=(V,A)$, where the vertex set is $V$ and a pair $(x,y)$ defines an arc in $A$ if $x \prec y$. 
The \emph{closed in-neighborhood} of a vertex $x\in V$ is $N^-(x)=\{x\}\cup \{y|(y,x)\in A \}$. Similarly the \emph{closed out-neighborhood} of a vertex $x$ is $N^+(x)=\{x\}\cup \{y|(x,y)\in A \}$. We extend this to subsets $S\subset V$ as $N^-(S) = \bigcup\limits_{ x\in S } N^-(x)$ and  $N^+(S) = \bigcup\limits_{ x\in S } N^+(x)$.
A set of vertices $S$ such that $N^+(S) = V$ is said to be \emph{dominating}. 
For $A,B\subset V$ we will also say that \emph{$A$ dominates $B$ } if $B\subset N^+(A)$.

A \emph{complete multidigraph} is a digraph where parallel edges are allowed and in which there is at least one arc between each pair of distinct vertices. Let
$D$ be a complete multidigraph whose arcs are the disjoint union of $k$ quasi-orders $\prec_1, \dots , \prec_k$ (parallel arcs are allowed). Define $N^-_i(x)$ (resp.\ $N^+_i(x)$) as the closed in-neighborhood (resp.\ out-neighborhood) of the digraph induced by $\prec_i$. 

Proving the conjecture of Erdős, and of Sands, Sauer and Woodrow \cite{sandssauer}, Bousquet, Lochet and Thomassé recently showed the following. 

\begin{theorem}[Bousquet, Lochet, Thomassé~\cite{esswproof}]\label{thm:multi_essw_old}
For every $k$, there exists an integer $f(k)$ such that if $D$ is a complete multidigraph whose arcs are the union of $k$ quasi-orders, then $D$ has a dominating set of size at most $f(k)$.
\end{theorem}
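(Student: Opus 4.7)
My plan is induction on $k$. For $k=1$, a single quasi order on $V$ whose underlying digraph has an arc between every pair of distinct vertices is a \emph{total} quasi order: the quotient by $x\sim y\iff x\prec y\prec x$ is a partial order in which every pair is comparable, hence a total order. Any element of its top equivalence class dominates $V$, so $f(1)=1$.

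For the inductive step from $k-1$ to $k$, I would aim for a \emph{fractional dominating lemma}: there exists $\eps_k>0$ such that for every probability measure $\mu$ on $V$ some vertex $v$ satisfies $\mu(N^-(v))\ge \eps_k$. Given such a lemma, iteratively applying it after reweighting $\mu$ by multiplicative weights, together with a bounded-complexity bound on the set system of in-neighborhoods in the union of $k$ quasi orders, should produce an integral dominating set of size $O_k(1)$ by an $\eps$-net-style argument. I would try to establish the fractional lemma by contradiction: if every $v$ had $\mu(N^-(v))<\eps_k$, then summing $\mu(v)\mu(N^-(v))$ over $v$ and using that the digraph is complete (so every unordered pair of vertices contributes to this sum in at least one direction) forces the in-neighbor mass to be distributed unevenly across the $k$ orders. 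A pigeonhole/averaging step would then isolate a large subset $V'\subseteq V$ and an index $i$ such that $\prec_i$ contributes only negligibly inside $V'$, so that the digraph induced on $V'$ is essentially covered by $k-1$ of the orders, contradicting the inductive hypothesis.

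The principal obstacle is exactly this ``essentially covered'' step: after averaging, one obtains a subset on which $\prec_i$ is \emph{almost} negligible, but to invoke induction one needs the induced subdigraph to be a genuine complete multidigraph on $k-1$ quasi orders. Removing the residual contribution of $\prec_i$ cleanly — perhaps by a further pruning inside $V'$, or by strengthening the inductive statement so that it tolerates a small measure-theoretic error — appears to be the delicate combinatorial heart of the argument. It is presumably also the place where the additional trick needed for the generalization proved in Appendix~\ref{app:essw} is inserted, since relaxing ``complete multidigraph'' to something weaker is the natural way to accommodate such an error term.
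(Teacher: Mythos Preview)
This theorem is cited from \cite{esswproof} and not reproved in the paper, but Appendix~\ref{app:essw} proves the generalization Theorem~\ref{thm:multi_essw_new} by essentially the Bousquet--Lochet--Thomass\'e method, so one can compare against that. Your proposal has a genuine gap, and it is not the one you flagged. The ``fractional dominating lemma'' you aim for is trivial and holds for \emph{any} complete digraph with $\eps=\tfrac12$: since every unordered pair contributes to $\sum_x \mu(x)\mu(N^-(x))$ in at least one direction, this sum is at least $\tfrac12$, so some $x$ has $\mu(N^-(x))\ge\tfrac12$. No induction on $k$ is needed, and the quasi-order hypothesis plays no role. The real obstacle is the next step, where you assume a ``bounded-complexity bound on the set system of in-neighborhoods'' in order to run an $\eps$-net argument. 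No such bound exists: already for $k=2$ one of the orders may be an arbitrary partial order (with the other a total order filling in the missing comparabilities), and principal down-sets in a poset can have unbounded VC dimension --- in the Boolean lattice $2^{[n]}$, down-sets restricted to the singleton layer realize essentially all subsets. Without that bound, multiplicative weights only yields a dominating set of size $O(\log|V|)$, which every tournament has anyway.

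What your plan never invokes, and what the actual proof hinges on, is the \emph{transitivity} of each $\prec_i$. Starting from the LP-duality distribution $w$ with $w(N^-(x))\ge\tfrac12$ for all $x$ (Lemma~\ref{lemma:at_least_half}), one partitions $V$ into $T_1,\dots,T_k$ according to which order gives $w(N_i^-(x))\ge 1/(2k)$, and then recurses inside each $T_i$. The recursion tree is indexed by sequences $(j_1,\dots,j_i)\in[k]^i$; once $i>k$, some index repeats, say $j_i=j_r$ with $r<i$. At such a leaf one samples $O_k(1)$ random points from $T_{j_1,\dots,j_{r-1}}$ according to its distribution: with positive probability they $\prec_{j_r}$-dominate all but a $\tfrac{1}{2k}$-fraction (in the relevant measure) of $T_{j_1,\dots,j_{i-1}}$, and by construction that surviving fraction $\prec_{j_i}$-dominates all of $T_{j_1,\dots,j_i}$. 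Since $j_r=j_i$, transitivity of $\prec_{j_i}$ collapses these two steps, so the sample dominates the whole leaf. This two-step domination through a repeated index is the combinatorial heart of the argument and is entirely absent from your outline.
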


We show the following generalization of Theorem \ref{thm:multi_essw_old}. 

\begin{theorem}\label{thm:multi_essw_new}
For every pair of positive integers $k$ and $l$, there exist an integer $f(k,l)$ such that if $D=(V,A)$ is a complete multidigraph whose arcs are the union of $k$ quasi-orders $\prec_1,\dots, \prec_k$, then $V$ contains a family of pairwise disjoint subsets $S_{i}^j$  for $i\in [k]$, $j\in [l]$  with the following properties:

\begin{itemize}
    \item $|\bigcup\limits_{i,j}S_{i}^j|\le f(k,l)$
    \item For each vertex $v\in V\setminus \bigcup\limits_{i,j}S_{i}^j$ there is an $i\in [k]$ such that for each $j\in [l]$ there is an edge of $\prec_i$ from a vertex of $S_{i}^j$ to  $v$.
\end{itemize}
\end{theorem}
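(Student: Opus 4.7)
The plan is to extend the method of Bousquet, Lochet, and Thomassé by iterating their original theorem and then carefully reassembling the resulting dominating sets into the desired pairwise disjoint family. First I would apply Theorem~\ref{thm:multi_essw_old} repeatedly: starting with $V$, take $T_1$ a dominating set of size at most $f(k)$; then apply the theorem to $V\setminus T_1$ to obtain $T_2$ of size at most $f(k)$; continue for $N$ iterations, where $N=N(k,l)$ is chosen below. Set $U=V\setminus\bigcup_j T_j$. For each $v\in U$ and each $j\in[N]$, pick an index $i_j(v)\in[k]$ such that $v$ has a $\prec_{i_j(v)}$-predecessor in $T_j$; such an index exists by the defining property of $T_j$. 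If $N\ge (l-1)k+1$, then by pigeonhole some color $i^*(v)\in[k]$ appears at least $l$ times in the sequence $(i_j(v))_{j\in[N]}$, and one may choose a subset $J^*(v)\subseteq[N]$ of $l$ such indices.

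The main obstacle is the assembly step: one must convert $T_1,\dots,T_N$ into $kl$ pairwise disjoint sets $S_i^j$ such that for each $v\in U$, a single color $i$ has $v$ being $i$-dominated from each $S_i^1,\dots,S_i^l$. A naive assignment of each $T_j$ to one slot $(i,r)$ does not automatically work, because different vertices may prefer the same $T_j$ to play different slot roles, and the colors $i^*(v)$ and subsets $J^*(v)$ vary with $v$. The needed trick---the ``one more trick'' of the authors---should be a second combinatorial argument. I see two plausible routes: (a) apply Theorem~\ref{thm:multi_essw_old} a second time on an auxiliary multidigraph whose vertices represent the $T_j$'s (or elements thereof) and whose quasi orders encode the ``who-serves-whom'' structure, producing a consistent assignment; or (b) proceed by induction on $l$, maintaining for each remaining vertex a nonempty set $I(v)\subseteq[k]$ of ``still-valid'' colors and, at each step, applying BLT to the current remainder while forcing compatibility with the existing $I(v)$.

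I would first attempt approach (b). The inductive hypothesis is the statement for $l-1$: it produces layers $S_i^j$ for $j<l$ together with sets $I(v)\neq\emptyset$ of colors consistent with all earlier layers. For the new layer one needs disjoint $S_i^l$'s (also disjoint from the earlier ones) so that each $v$ in the current remainder with $I(v)\neq\emptyset$ becomes $i$-dominated by $S_i^l$ for some $i\in I(v)$. Since BLT does not directly respect the constraint $i\in I(v)$, I expect one has to iterate it many times and then pigeonhole on the color profiles to extract a layer compatible with the existing color assignments; the subsequent disjointness is then obtained by making the iteration long enough that a consistent profile of indices can be chosen across all $v$. The hardest part will be keeping both the disjointness and the constraint $i\in I(v)$ while still bounding the total size in terms of $k$ and $l$ only (independently of $|V|$); I expect the final bound $f(k,l)$ to be a tower-type function of the two parameters.
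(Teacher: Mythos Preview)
Your diagnosis of the obstacle is accurate, but both routes you sketch run into a genuine difficulty that the authors themselves flag: right after stating the theorem they write that they ``saw no way to derive Theorem~\ref{thm:multi_essw_new} from Theorem~\ref{thm:multi_essw_old}, but with an extra modification the proof goes through.'' So the black-box iteration you propose---apply BLT repeatedly and then reassemble---is precisely the approach they could not complete. Concretely, your route (b) stalls here: once earlier layers have shrunk $I(v)$ to, say, $\{i_0\}$ for many vertices $v$, building the next layer requires a bounded set that $\prec_{i_0}$-dominates all of them, and a single quasi order on an arbitrary subset has no reason to admit one. Your pigeonhole route is no better: already for $k=l=2$ one can write down six binary patterns of length $4$ (the three balanced $2$--$2$ strings and their complements) such that no partition of $[4]$ into two pairs $\{a_1,a_2\},\{b_1,b_2\}$ satisfies ``all $v$ have $c_{a_1}(v)=c_{a_2}(v)=1$ or $c_{b_1}(v)=c_{b_2}(v)=2$''; nothing in your plan rules out such patterns, and enlarging $N$ only multiplies the adversarial possibilities.

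What the paper actually does is open up the BLT proof rather than quote it. The ``one more trick'' is a strengthening of the fractional-domination step (Lemma~\ref{lemma:prob_dist}): instead of merely a distribution $w$ with $w(N^-(x))\ge 1/2$, they manufacture one that in addition satisfies $w(x)\le 4\delta$ for every $x$, by averaging $\lfloor 1/\delta\rfloor$ such distributions while peeling off heavy vertices. They then run the BLT tree-of-partitions (Lemma~\ref{lemma:good} iterated), and at the sampling stage draw all the random dominating sets for all $j\in[l]$ simultaneously. Because no vertex carries mass above $4\delta$, a union bound over the $O(N^2)$ possible collisions shows that the samples are pairwise disjoint with positive probability while still covering enough weight. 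The resulting $f(k,l)$ is of order $l^3 k^{O(k)}$ times polylogarithmic factors---very far from the tower-type bound you anticipated.
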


Note that disjointness is the real difficulty here, without it the theorem would trivially hold from repeated applications of Theorem \ref{thm:multi_essw_old}. 
We saw no way to derive Theorem \ref{thm:multi_essw_new} from Theorem \ref{thm:multi_essw_old}, but with an extra modification the proof goes through.
The proof of Theorem \ref{thm:multi_essw_new} can be found in Section \ref{sec:esswproof}.

\subsection{Hadwiger's Illumination conjecture and pseudolines}\label{sec:illum}

 
    Hadwiger's Illumination conjecture has a number of equivalent formulations and names\footnote{These include names such as Levi–Hadwiger Conjecture, Gohberg–Markus Covering Conjecture, Hadwiger Covering Conjecture, Boltyanski–Hadwiger Illumination Conjecture.}.  For a recent survey, see \cite{MR3816868}. We will use the following version of the conjecture.
    
    Let $\mathbb{S}^{d-1}$ denote the unit sphere in $\mathbb R^d$.
    For a convex body $C$, let $\partial C$ denote the boundary of $C$ and let $int(C)$ denote its interior.
    A direction (light) $u\in \mathbb{S}^{d-1}$ \emph{illuminates} $b\in \partial C$ if $\{b+\lambda u:\lambda>0 \}\cap int (C)\ne \emptyset$.  

    \begin{conjecture}
     The boundary of any convex body in $\mathbb{R}^d$ can be illuminated by $2^d$ or fewer directions.  Furthermore, the $2^d$ lights are necessary if and only if the body is a parallelepiped.
    \end{conjecture}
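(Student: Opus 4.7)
The plan is to attack the conjecture via Boltyanski's equivalent reformulation: the boundary of a convex body $C \subset \mathbb{R}^d$ can be illuminated by $N$ directions if and only if $C$ can be covered by $N$ positive homothets of itself with ratio strictly less than $1$. This equivalence follows from a compactness argument, since a direction $u$ illuminates exactly those boundary points that lie on the boundary of a suitably small translate of $C$ along $-u$, and conversely any such small homothet corresponds to a direction. Working with the covering formulation is often easier, being purely metric and amenable to volume and packing comparisons.

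Given this setup, the first step is to establish the upper bound $2^d$ by induction on the dimension. The base cases $d=1$ (any interval is covered by its two halves) and $d=2$ (Levi's theorem, which one can prove by inscribing a parallelogram and using four directions along its diagonals) provide the foundation. For the inductive step, the natural attempt is to slice $C$ by a hyperplane $H$ through its interior, apply the bound $2^{d-1}$ to the cross-section $C \cap H$, and thicken each $(d-1)$-dimensional homothet into a $d$-dimensional one covering a slab of $C$ on one side of $H$. Doubling the cover for the two sides would give $2 \cdot 2^{d-1} = 2^d$ as desired, but executing the thickening so that every resulting homothet has ratio strictly less than $1$ while still covering the curved portion of $\partial C$ far from $H$ is the delicate step.

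For the characterization of parallelepipeds as the unique extremal bodies, one direction is immediate: a $d$-dimensional parallelepiped has $2^d$ vertices, and each vertex admits only its outward diagonal as an illuminating direction, so no single direction illuminates two vertices simultaneously. The converse is a rigidity statement: any convex body $C$ requiring all $2^d$ directions must, after analyzing the illumination patterns of its extreme points, have exactly $2^d$ extreme points arranged in the combinatorial type of a parallelepiped; a careful convex-geometric argument should then force $C$ itself to be one.

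The main obstacle, and the reason this remains a famous open problem for $d \geq 4$, is precisely the inductive step sketched above: there is no known way to combine lower-dimensional covers of slices into a global cover without losing a factor growing with $d$. The best known unconditional bound is of order $\binom{2d}{d}(d\ln d + d\ln\ln d + O(d))$ (Huang--Slomka--Tkocz--Vritsiou, via probabilistic covering by random translates of a shrunken copy). Direct geometric arguments succeed for $d \le 3$ (Levi in the plane; Papadoperakis and Lassak in three-space), but a unified proof for all $d$ would plausibly require either a fundamentally new geometric insight, perhaps leveraging John's ellipsoid or a symmetrization scheme adapted to homothetic covers, or a combinatorial breakthrough in set-cover theory tailored to the structure of convex bodies. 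Since only the planar case is needed in the present paper, we are content to invoke Levi's theorem.
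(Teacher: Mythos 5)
This statement is labeled a \emph{conjecture} in the paper, and the paper does not prove it: immediately after stating it, the authors note that it is open in general, that the planar case $d=2$ is a theorem of Levi, and that for $d=3$ the best known bound is Papadoperakis' 16 lights. The only part the paper ever uses is the planar statement (three illuminating directions for any planar convex body that is not a parallelogram), invoked inside Lemma \ref{lemma:our_illumination}. Your proposal, to its credit, recognizes this and ends by falling back on exactly the same move --- citing Levi --- so for the purposes of this paper you and the authors are in the same position.

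But as a proof of the stated conjecture your text has genuine gaps, which you yourself partly concede. The inductive slicing step (cover a cross-section by $2^{d-1}$ homothets and thicken to both sides) is not just ``delicate''; it is not known how to make it work, and this is precisely why the problem is open for $d\ge 4$, so the upper bound $2^d$ is not established. The rigidity half of the characterization (that only parallelepipeds require $2^d$ directions) is asserted, not proved. Two smaller inaccuracies: a vertex of a parallelepiped is illuminated by every direction in the open cone of inward directions at that vertex, not only by ``its outward diagonal'' --- the correct argument is that these $2^d$ open cones are pairwise disjoint, so one direction illuminates at most one vertex; and your sketch of the planar case via an inscribed parallelogram and four diagonal directions would only give the bound $4$, whereas the application in this paper needs the stronger part of Levi's theorem, namely that three directions suffice whenever $C$ is not a parallelogram. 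So the honest summary is: the conjecture remains unproved by both you and the paper, and what the paper actually relies on is Levi's planar theorem in its full form, including the parallelogram characterization.
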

    
The conjecture is open in general. The $d=2$ case was settled in affirmative by Levi \cite{MR76368} in 1955. For $d=3$
the best result is due to Prymak \cite{Prymak}, who showed that 16 lights are enough, improving the earlier method of Papadoperakis \cite{MR1689273} with the help of a computer program.

In the following part we make an interesting connection between the Illumination conjecture  for $d=2$ and pseudolines. Roughly speaking, we show that the Illumination conjecture implies that for any convex body in the plane the boundary can be broken into three parts such that the translates of each part behave similarly to pseudolines, i.e., we get three pseudoline arrangements from the translates of the three parts. 

To put this into precise terms, we need some technical definitions and statements.
Fix a body $C$ and an injective parametrization of $\partial C$, $\gamma:[0,1]\rightarrow \partial C$, that follows $\partial C$ counterclockwise.
For each point $p$ of $\partial C$ there is a set of possible tangents touching at $p$. Let  $g(p)\subset \mathbb{S}^1$ denote the Gauss image of $p$, i.e., $g(p)$ is the set of unit outernormals of the tangent lines touching at $p$. Note that $g(p)$ is an arc of $\mathbb{S}^1$ and $g(p)$ is a proper subset of $\mathbb{S}^1$. 

Let $g_+:\partial C\rightarrow\mathbb{S}^1$ be the function that assigns to $p$ the counterclockwise last element of $g(p)$. (See Figure \ref{fig:gauss_tan} left.) Similarly let $g_-$ be the function that assigns to $p$ the clockwise last element of $g(p)$. Thus, $g(p)$ is the arc of $\mathbb{S}^1$ from $g_-(p)$ to $g_+(p)$. Let $|g(p)|$ denote the length of $g(p)$. 

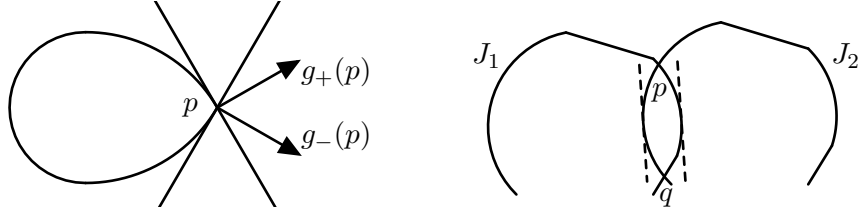
\begin{figure}[!ht]
    \centering
    \begin{tikzpicture}[line cap=round,line join=round,>=triangle 45,x=1.0cm,y=1.0cm]
\clip(5.824158146704215,1.6939909822621093) rectangle (10.8,4.4022061705050906);
\draw [shift={(7.,2.)},line width=\lw]  plot[domain=0.5235987755982986:1.5707963267948966,variable=\t]({1.*2.*cos(\t r)+0.*2.*sin(\t r)},{0.*2.*cos(\t r)+1.*2.*sin(\t r)});
\draw [shift={(7.,4.)},line width=\lw]  plot[domain=4.71238898038469:5.759586531581288,variable=\t]({1.*2.*cos(\t r)+0.*2.*sin(\t r)},{0.*2.*cos(\t r)+1.*2.*sin(\t r)});
\draw [shift={(7.,3.)},line width=\lw]  plot[domain=1.5707963267948966:4.71238898038469,variable=\t]({1.*1.*cos(\t r)+0.*1.*sin(\t r)},{0.*1.*cos(\t r)+1.*1.*sin(\t r)});
\draw [line width=\lw,domain=5.824158146704215:10.274171485061972] plot(\x,{(--18.124355652982153-1.7320508075688785*\x)/1.});
\draw [line width=\lw,domain=5.824158146704215:10.274171485061972] plot(\x,{(-12.12435565298215--1.7320508075688785*\x)/1.});
\draw [->,line width=1.pt] (8.732050807568879,3.) -- (9.832456454322593,3.6353194963710407);
\draw [->,line width=1.pt] (8.732050807568879,3.) -- (9.844045808452828,2.3579893869021333);
\draw (8.15,3.25) node[anchor=north west] {$p$};
\draw (9.7,2.95) node[anchor=north west] {$g_-(p)$};
\draw (9.7,3.8) node[anchor=north west] {$g_+(p)$};
\end{tikzpicture}
~~~~~~~~
\begin{tikzpicture}[line cap=round,line join=round,>=triangle 45,x=0.6cm,y=0.6cm]
\clip(1.010405779360095,0.7779725023481115) rectangle (9.945938145792228,4.969084292744436);
\draw (4.75,3.7) node[anchor=north west] {$p$};
\draw (4.9,1.4) node[anchor=north west] {$q$};
\draw [line width=\lw,dash pattern=on 3pt off 3pt] (5.52700500582115,3.969615338937309)-- (5.701093102682993,1.3323544637799518);
\draw [line width=\lw,dash pattern=on 3pt off 3pt] (4.691398477047223,3.8957218914504184)-- (4.86141088560568,1.3202036253261604);
\draw [line width=\lw] (5.516440449569205,1.8865088084802843)-- (4.995922845299596,1.0326578708773406);
\draw [line width=\lw] (8.392512263686044,1.2568699665550929)-- (8.913029867955656,2.1107209041580446);
\draw [line width=\lw] (3.0820370497082816,4.613210333135324)-- (4.995922845299596,4.032657870877341);
\draw [line width=\lw] (8.39251226368604,4.256869966555094)-- (6.478626468094716,4.837422428813083);
\draw [shift={(3.495922845299593,2.532657870877341)},line width=\lw]  plot[domain=-0.30951591373703113:0.7853981633974475,variable=\t]({1.*2.1213203435596446*cos(\t r)+0.*2.1213203435596446*sin(\t r)},{0.*2.1213203435596446*cos(\t r)+1.*2.1213203435596446*sin(\t r)});
\draw [shift={(3.495922845299593,2.532657870877341)},line width=\lw]  plot[domain=1.7671635199760698:3.9269908169872414,variable=\t]({1.*2.121320343559645*cos(\t r)+0.*2.121320343559645*sin(\t r)},{0.*2.121320343559645*cos(\t r)+1.*2.121320343559645*sin(\t r)});
\draw [shift={(6.892512263686043,2.756869966555093)},line width=\lw]  plot[domain=-0.3095159137370276:0.7853981633974495,variable=\t]({1.*2.121320343559643*cos(\t r)+0.*2.121320343559643*sin(\t r)},{0.*2.121320343559643*cos(\t r)+1.*2.121320343559643*sin(\t r)});
\draw [shift={(6.892512263686043,2.756869966555093)},line width=\lw]  plot[domain=1.7671635199760762:3.9269908169872414,variable=\t]({1.*2.1213203435596544*cos(\t r)+0.*2.1213203435596544*sin(\t r)},{0.*2.1213203435596544*cos(\t r)+1.*2.1213203435596544*sin(\t r)});
\draw (8.676673546001679,4.6) node[anchor=north west] {$J_2$};
\draw (0.8,4.6) node[anchor=north west] {$J_1$};
\end{tikzpicture}
\caption{Extremal tangents at a boundary point (on the left) and parallel tangents on two intersecting translates (on the right).}
    \label{fig:gauss_tan}
\end{figure}

\begin{obs}\label{obs:continuity}
$g_+\circ \gamma$ is continuous from the right and $g_-\circ \gamma$ is continuous from the left. 
\end{obs}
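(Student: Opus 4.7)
The plan is to combine the monotonicity of the Gauss map on a convex body with a standard limiting argument for supporting lines.

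First I would establish the following monotonicity statement: for $t_1<t_2$ in $[0,1]$ with $\gamma(t_1)\ne\gamma(t_2)$, the arc $g(\gamma(t_2))$ lies weakly counterclockwise after $g(\gamma(t_1))$ on $\mathbb{S}^1$; in particular $g_+(\gamma(t_1))\le g_-(\gamma(t_2))$ in the CCW cyclic order. Indeed, for each outer normal $u$ the supporting line of $C$ with that normal is unique (the line $\{p:\langle p,u\rangle=h_C(u)\}$), so if two distinct directions $u\ne u'$ both lay in $g(\gamma(t_1))\cap g(\gamma(t_2))$, then two non-parallel supporting lines would each pass through both $\gamma(t_1)$ and $\gamma(t_2)$, forcing $\gamma(t_1)=\gamma(t_2)$. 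Hence the two arcs meet in at most one direction, and the CCW travel of $\gamma$ pins that shared direction to the CCW endpoint of $g(\gamma(t_1))$ and the CCW start of $g(\gamma(t_2))$. Consequently $g_+\circ\gamma$ and $g_-\circ\gamma$ are weakly monotone and admit one-sided limits everywhere.

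Now fix $t_0$ and let $u^*=\lim_{s\downarrow t_0}g_+(\gamma(s))$. The inequalities $g_+(\gamma(t_0))\le g_-(\gamma(s))\le g_+(\gamma(s))$ for $s>t_0$ pass to the limit to yield $g_+(\gamma(t_0))\le u^*$. For the reverse inequality, for each $s>t_0$ let $\ell_s$ be the supporting line of $C$ through $\gamma(s)$ with outer normal $g_+(\gamma(s))$. By continuity of $\gamma$ the base points $\gamma(s)$ tend to $\gamma(t_0)$ and the normals tend to $u^*$, so the lines $\ell_s$ converge to the line $\ell^*$ through $\gamma(t_0)$ with outer normal $u^*$. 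A limit of supporting half-planes of $C$ is again a supporting half-plane, hence $\ell^*$ supports $C$ at $\gamma(t_0)$ and $u^*\in g(\gamma(t_0))$; this gives $u^*\le g_+(\gamma(t_0))$, and combined with the opposite inequality $u^*=g_+(\gamma(t_0))$. The argument for left-continuity of $g_-\circ\gamma$ is entirely symmetric, using supporting lines with outer normals $g_-(\gamma(s))$ for $s<t_0$.

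The only delicate point is that $\mathbb{S}^1$ is cyclically rather than linearly ordered, so each ``$\le$'' above should be read in a small CCW arc around the directions involved; since $g(p)$ is always a proper sub-arc and we work locally around $t_0$, this is purely cosmetic and does not cause any real obstacle.
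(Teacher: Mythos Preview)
The paper states this as an \emph{Observation} without proof, treating it as a standard fact about the Gauss map of a planar convex body; there is no argument in the paper to compare against. Your proof supplies the omitted details correctly: monotonicity of the Gauss image along the CCW-parametrized boundary (via uniqueness of the supporting line with a given outer normal), together with closedness of the set of supporting half-planes under limits, yields exactly the one-sided continuity claimed.

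One remark: the sentence ``the CCW travel of $\gamma$ pins that shared direction to the CCW endpoint of $g(\gamma(t_1))$ and the CCW start of $g(\gamma(t_2))$'' is the real content of the monotonicity step and is asserted rather than argued. A clean way to justify it is to observe that the outward normal to the chord $\overline{\gamma(t_1)\gamma(t_2)}$ lies in both $g(\gamma(t_1))$ and $g(\gamma(t_2))$ and, by convexity and the CCW orientation, is the CCW-last normal at $\gamma(t_1)$ lying in $g(\gamma(t_2))$; this pins the overlap (if any) to the endpoints as you claim. With that filled in, the argument is complete, and your acknowledgement that the ``$\le$'' is to be read locally on $\mathbb{S}^1$ handles the only remaining subtlety.
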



For $t_1<t_2$ let $\gamma_{[t_1,t_2]}$ denote the restriction of $\gamma$ to the interval $[t_1,t_2]$. For $t_1>t_2$ let $\gamma_{[t_1,t_2]}$ denote the concatenation of $\gamma_{[t_1,1]}$ and $\gamma_{[0,t_2]}$.
When it leads to no confusion, we identify $\gamma_{[t_1,t_2]}$ with its image, which is a closed connected part of the boundary $\partial C$. 
For such a $J=\gamma_{[t_1,t_2]}$, let $g(J)=\bigcup\limits_{p\in J}g(p)$. Clearly, $g(J)$ is an arc of $\mathbb{S}^1$ from $g_-(t_1)$ to $g_+(t_2)$; let $|g(J)|$ denote the length of this arc.

\begin{lemma}
 Let $C$ be a convex body and assume that $J$ is a closed connected part of $\partial C$ such that $|g(J)|<\pi$. Then there are no two translates of $J$ that intersect in more than one point.    
\end{lemma}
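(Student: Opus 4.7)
The plan is to use the hypothesis $|g(J)|<\pi$ to realize $J$ as the graph of a concave function over a suitable direction, and then to exploit the resulting monotone structure to limit the intersections of any two translates. Since $g(J)$ is an arc of $\mathbb{S}^1$ of length strictly less than $\pi$, it is contained in some open half-circle; equivalently, there is a unit vector $u$ such that at every point of $J$ every tangent direction of $\partial C$ has strictly positive inner product with $u$. Choosing coordinates so that $u$ is the positive $x$-axis, the $x$-coordinate is then strictly monotone along $J$, so $J$ is the graph of a continuous function $h\colon[a,b]\to\mathbb{R}$. Because $J$ is a subarc of the boundary of the convex body $C$ and is a graph over the $x$-axis, $h$ must be either concave or convex; without loss of generality assume it is concave.

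I would then consider two distinct translates $J_1=J+v_1$ and $J_2=J+v_2$ and write $(s,t)=v_2-v_1\neq 0$. Points of $J_1\cap J_2$ correspond to values of $x$ in the overlap of the two $x$-domains satisfying $h(x)=h(x-s)+t$. The case $s=0$ forces $t\neq 0$ and yields an empty intersection, so assume $s>0$. Setting $\phi(x)=h(x)-h(x-s)$, concavity of $h$ makes the one-sided derivative $h'$ non-increasing, hence $\phi'(x)=h'(x)-h'(x-s)\le 0$, so $\phi$ is non-increasing on its domain. When $h$ is in addition strictly concave --- which happens precisely when $J$ contains no line segment --- $\phi$ is strictly decreasing, so the equation $\phi(x)=t$ has at most one solution and therefore $|J_1\cap J_2|\le 1$, as claimed.

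The step I expect to be the most delicate is the first one: verifying that the condition $|g(J)|<\pi$ really does produce a direction $u$ for which the tangent--$u$ inner product is strictly positive at \emph{every} point of $J$, including the endpoints where $g(p)$ is itself a non-trivial arc rather than a single direction. Here the strict inequality $|g(J)|<\pi$, together with the one-sided continuity of $g_\pm\circ\gamma$ recorded in Observation \ref{obs:continuity}, is what allows picking $u$ with a uniform strictly positive margin, and this margin is exactly what is needed to make $J$ a graph. A secondary technical point is the degenerate case in which $J$ contains a line segment: then $\phi$ can have a flat piece and two translates can share an entire subarc, but the solution set of $\phi(x)=t$ is still a closed interval, so the intersection remains a single connected component --- which is the feature that really matters for the pseudoline-type applications of the lemma later on.
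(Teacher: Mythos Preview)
Your approach is correct and complete for the strictly convex case, and you correctly flag the one genuine subtlety (flat pieces of $\partial C$), but it is a genuinely different route from the paper's.

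The paper's proof is three lines of pure geometry: assume $J_1$ and $J_2$ meet in two points $p,q$; observe that on each of $J_1,J_2$ some tangent is parallel to the chord $\overline{pq}$; conclude that $J$ carries two distinct tangents parallel to $\overline{pq}$, forcing $|g(J)|\ge\pi$. No coordinates are chosen and no auxiliary function is introduced.

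Your argument instead coordinatizes: you use $|g(J)|<\pi$ to write $J$ as the graph of a concave function $h$, and then reduce the question to the monotonicity of $\phi(x)=h(x)-h(x-s)$. This costs a little more setup but buys more precision: it shows directly that the intersection of two translates is always an interval in the $x$-variable, which cleanly isolates the flat-boundary case and yields exactly the ``cross at most once'' property needed for the pseudoline application downstream. The paper's short argument is less explicit about this degenerate case.

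Two minor points. Your worry about the endpoints in step~1 is unfounded: by definition $g(J)$ already includes the full Gauss image of each endpoint of $J$, so $|g(J)|<\pi$ directly yields a direction $u$ strictly outside $g(J)\cup(-g(J))$ with room to spare. And in the monotonicity step you should avoid invoking $\phi'=h'(x)-h'(x-s)$ for a general concave $h$; use instead the elementary fact that for concave $h$ the increment $x\mapsto h(x)-h(x-s)$ is non-increasing, which follows from the chord-slope inequality and needs no differentiability.
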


\begin{proof}
Suppose $J$ has two translates $J_1$ and $J_2$ such that they intersect in two points, $p$ and $q$. Now both $J_1$ and $J_2$ have a tangent that is parallel to the segment $pq$, but since they lie on different sides of the $pq$ line, they have opposite outer normal vectors. (See Figure \ref{fig:gauss_tan} right.) This shows that $J$ has two different tangents parallel to $pq$ and therefore $|g(J)|\ge \pi$. 
\end{proof}

\begin{lemma}\label{lemma:our_illumination}
  For a convex body $C$, which is not a parallelogram, and an injective parametrization $\gamma$ of $\partial C$, we can pick  $0\le t_1<t_2<t_3\le 1$ such that  $|g(\gamma_{[t_1,t_2]})|,|g(\gamma_{[t_2,t_3]})|$ and $|g(\gamma_{[t_3,t_1]})|$ are each strictly smaller than $\pi$.
\end{lemma}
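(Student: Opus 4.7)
The plan is to deduce the lemma from Levi's theorem (the $d=2$ case of the Illumination conjecture) by translating an illumination of $\partial C$ by three directions into a cyclic partition of $\partial C$ whose pieces each have Gauss image contained in an open half-circle of $\mathbb S^1$.

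Observe first that a direction $u\in\mathbb S^1$ illuminates a boundary point $p\in\partial C$ precisely when every outer normal at $p$ makes an obtuse angle with $u$, i.e., when $g(p)$ is contained in the open half-circle $H(u):=\{n\in\mathbb S^1:\langle u,n\rangle<0\}$, which is an open arc of length $\pi$. Since $C$ is not a parallelogram, Levi's theorem yields three directions $u_1,u_2,u_3$ that illuminate $\partial C$. Setting $B_i:=\{p\in\partial C:g(p)\subset H(u_i)\}$, we then have $\bigcup_iB_i=\partial C$. Using Observation~\ref{obs:continuity} together with the openness of $H(u_i)$, each $B_i$ is open in $\partial C$; the cyclic monotonicity of the Gauss map along $\partial C$ then makes $B_i$ connected, so $B_i$ is in fact an open arc.

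The remaining task is combinatorial: choose $t_1<t_2<t_3$ in $[0,1]$ so that each of the three closed arcs $\gamma_{[t_1,t_2]},\gamma_{[t_2,t_3]},\gamma_{[t_3,t_1]}$ lies inside some $B_i$. If some single $B_i$ covers $\partial C$, this is trivial. If only two of the $B_i$'s cover $\partial C$, their pairwise intersection on the circle has two components, and we place $t_1,t_2$ in these components and $t_3$ arbitrarily in one of the resulting arcs. In the genuinely three-way case, the cyclic overlap structure on the circle forces $B_i\cap B_{i+1}\ne\emptyset$ for all $i$ (indices mod $3$), and we pick each $t_j$ inside such a pairwise intersection.

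Once such cut points are fixed, each closed arc $J=\gamma_{[t_j,t_{j+1}]}\subset B_i$ has Gauss image $g(J)$ equal to a closed arc of $\mathbb S^1$ (it runs from $g_-(\gamma(t_j))$ to $g_+(\gamma(t_{j+1}))$) contained in the open arc $H(u_i)$ of length $\pi$; a closed arc inside an open arc of length $\pi$ has length strictly less than $\pi$, giving $|g(J)|<\pi$ as required. The main obstacle I expect is the third step, the combinatorial extraction of the cut points, which requires a small case analysis to handle all configurations of how the three open arcs $B_i$ overlap on the circle $\partial C$; the non-parallelogram hypothesis enters only through Levi's theorem, and the strict inequality $|g(J)|<\pi$ comes for free from the fact that each $H(u_i)$ is \emph{open}.
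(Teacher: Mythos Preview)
Your proposal is correct and follows essentially the same route as the paper's proof: both invoke Levi's theorem, identify the three illuminated open arcs $B_i\subset\partial C$, argue that the pairwise overlaps are nonempty, and place the cut points $t_1,t_2,t_3$ in those overlaps so that each resulting boundary arc has its Gauss image trapped in an open half-circle $H(u_i)$, hence of length strictly less than $\pi$. One small remark: your cases~1 and~2 are vacuous, since one or two open half-circles $H(u_i)$ can never cover $\mathbb S^1$ (so one or two directions never illuminate all of $\partial C$); only your ``genuinely three-way'' case ever occurs, and that is precisely the situation the paper handles directly, using the same contradiction (if $B_1\cap B_2=\emptyset$ then $B_3$ must swallow one of $B_1,B_2$, so two lights would suffice).
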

    
\begin{proof}
    We use the 2-dimensional case of the Illumination conjecture (proved by Levi \cite{MR76368}). If $C$ is not a parallelogram, we can pick three directions, $u_1,u_2$ and $u_3$, that illuminate $C$. Pick $t_1$ such that $\gamma(t_1)$ is illuminated by both $u_1$ and $u_2$. To see why this is possible, suppose that the parts illuminated by $u_1$ and $u_2$ are disjoint. Each light illuminates a continuous open ended part of the boundary. So in this case there are two disjoint parts of the boundary that are not illuminated. If $u_3$ illuminates both, then it illuminates everything that is illuminated by $u_1$ or everything that is illuminated by $u_2$.     
    This would mean that two lights illuminate the whole boundary but this is not possible for any convex body. Indeed, suppose that two lights $u$ and $v$ illuminate the whole body. Then there is a halfplane $H$ through the origin that contains both vectors $u$ and $v$. Take a translate of $H$ that touches $C$. Clearly the touching point is not illuminated by either $u$ or $v$, a contradiction.
    
    Using the same argument, pick $t_2$ and $t_3$ such that $\gamma(t_2)$ is illuminated by both $u_2$ and $u_3$ and $\gamma(t_3)$ is illuminated by both $u_3$ and $u_1$. 
    
    Note that $u_1$ illuminates exactly those points for which $g_+(p)<u_1+\pi/2$ and $g_-(p)>u_1-\pi/2$.  Therefore, $|g(\gamma_{[t_1,t_3]})|<u_1+\pi/2-(u_1-\pi/2)=\pi$. Similarly $|g(\gamma_{[t_1,t_2]})|<\pi$ and $|g(\gamma_{[t_2,t_3]})|<\pi$.
\end{proof}
 
 Observation \ref{obs:continuity} and Lemma \ref{lemma:our_illumination} immediately imply the following statement.

\begin{lemma}\label{lemma:our_illumination_epsilon}
 For a convex body $C$, which is not a parallelogram, and an injective parametrization $\gamma$ of $\partial C$, we can pick  $0\le t_1<t_2<t_3\le 1$ and $\varepsilon>0$ such that  $|g(\gamma_{[t_1-\varepsilon,t_2+\varepsilon]})|$, $|g(\gamma_{[t_2-\varepsilon,t_3+\varepsilon]})|$ and $|g(\gamma_{[t_3-\varepsilon,t_1+\varepsilon]})|$ are each strictly smaller than $\pi$.
  \end{lemma}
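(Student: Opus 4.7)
The plan is to bootstrap directly from Lemma \ref{lemma:our_illumination} by invoking the one-sided continuity provided by Observation \ref{obs:continuity}. First I would apply Lemma \ref{lemma:our_illumination} to obtain $0 \le t_1 < t_2 < t_3 \le 1$ for which each of the three arcs $|g(\gamma_{[t_1,t_2]})|,\,|g(\gamma_{[t_2,t_3]})|,\,|g(\gamma_{[t_3,t_1]})|$ is \emph{strictly} less than $\pi$. The goal is then to show that expanding each interval by a sufficiently small $\varepsilon > 0$ on both sides perturbs these three arc-lengths only slightly, so the strict inequalities survive.

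The key observation is that for any $s_1 < s_2$, the arc $g(\gamma_{[s_1,s_2]}) \subset \mathbb{S}^1$ is precisely the arc traced from $g_-(\gamma(s_1))$ counterclockwise to $g_+(\gamma(s_2))$, so its length depends continuously on these two endpoints as points of $\mathbb{S}^1$. Now fix one of the three arcs, say $J_\varepsilon = \gamma_{[t_1-\varepsilon,\,t_2+\varepsilon]}$. As $\varepsilon \to 0^+$, we have $t_1 - \varepsilon \to t_1$ from the left and $t_2 + \varepsilon \to t_2$ from the right. By Observation \ref{obs:continuity}, $g_- \circ \gamma$ is continuous from the left and $g_+ \circ \gamma$ is continuous from the right, so
\[
g_-(\gamma(t_1-\varepsilon)) \longrightarrow g_-(\gamma(t_1)) \quad\text{and}\quad g_+(\gamma(t_2+\varepsilon)) \longrightarrow g_+(\gamma(t_2))
\]
as $\varepsilon \to 0^+$. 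Consequently $|g(J_\varepsilon)| \to |g(\gamma_{[t_1,t_2]})| < \pi$, so there exists $\varepsilon_1 > 0$ such that $|g(\gamma_{[t_1-\varepsilon,t_2+\varepsilon]})| < \pi$ for every $0 < \varepsilon < \varepsilon_1$.

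Applying the same reasoning to the arcs $\gamma_{[t_2,t_3]}$ and $\gamma_{[t_3,t_1]}$ yields thresholds $\varepsilon_2, \varepsilon_3 > 0$. Setting $\varepsilon := \tfrac{1}{2}\min(\varepsilon_1, \varepsilon_2, \varepsilon_3)$ (and further shrinking, if necessary, so that the intervals $[t_i - \varepsilon, t_{i+1} + \varepsilon]$ remain non-degenerate and do not overlap in a way that violates the cyclic structure) gives an $\varepsilon > 0$ that works simultaneously for all three arcs, which is exactly the conclusion.

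The argument is essentially a soft continuity step, so I do not expect any real obstacle; the only subtlety worth being careful about is matching the correct one-sided limit with the correct endpoint (left-limit at the lower endpoint for $g_-$, right-limit at the upper endpoint for $g_+$), since $g_+$ and $g_-$ can genuinely disagree at corners of $\partial C$ where $|g(p)| > 0$.
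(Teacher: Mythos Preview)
Your proposal is correct and is exactly the argument the paper has in mind: the paper does not spell out a proof but simply states that Lemma~\ref{lemma:our_illumination_epsilon} follows immediately from Observation~\ref{obs:continuity} and Lemma~\ref{lemma:our_illumination}, and you have supplied precisely those details, correctly matching the left-continuity of $g_-\circ\gamma$ at the lower endpoint and the right-continuity of $g_+\circ\gamma$ at the upper endpoint.
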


\section{Proof of Theorem \ref{thm:general_three_col}}\label{sec:proof}
\subsection{Quasi-orders on planar point sets}

Cones provide a natural way to define quasi-orders on  point sets (see \cite{TT07} for an example where this idea was used). A \emph{cone} is a closed region in the plane that is bounded by two rays that emanate from the origin. For a cone $K$ let $-K$ denote the cone that is the reflection of $K$ across the origin and let $q+K$ denote the translate of $K$ by the vector $q$.  

\begin{obs}\label{obs:cones}
For any $p,q\in \mathbb{R}^2$ and cone $K$, the following are equivalent (see Fig.~\ref{fig:basic_cones}):
\begin{itemize}
    \item $p\in q+K$
    \item $q \in p+(-K)$
    \item $p+K\subseteq q+K$
\end{itemize}
\end{obs}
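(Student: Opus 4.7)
The plan is to show that all three conditions are equivalent to the single statement $p - q \in K$. Set $v := p - q$. Condition (1), $p \in q + K$, is by definition of the Minkowski translate the statement $v \in K$. Condition (2), $q \in p + (-K)$, unfolds to $-v \in -K$, and since $-K$ is defined as the reflection of $K$ through the origin, this is the same as $v \in K$. So (1) $\Leftrightarrow$ (2) is a direct algebraic rewriting of the defining set membership.

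For the equivalence with (3), I would subtract $q$ from both sides of the inclusion $p + K \subset q + K$ to turn it into the homogenized statement $v + K \subset K$. Two structural features of a closed convex cone with apex at the origin then finish the job. First, $0 \in K$, because the origin is the common endpoint of the two bounding rays, so $v + K \subset K$ immediately yields $v = v + 0 \in K$, giving (3) $\Rightarrow$ (1). Second, $K$ is closed under addition, $K + K \subset K$, which follows from convexity together with closure under positive scaling via the identity $v_1 + v_2 = 2 \cdot \tfrac{v_1 + v_2}{2}$; this yields $v + K \subset K$ whenever $v \in K$, giving (1) $\Rightarrow$ (3).

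There is no real difficulty here: the observation is essentially a chain of definitional unpackings, and Figure \ref{fig:basic_cones} makes all three equivalences visually transparent. The only substantive ingredient is the semigroup property $K + K \subset K$ of a convex cone based at the origin, and this is where one uses the convexity of $K$ that is implicit in the word \emph{cone} as used in this context (otherwise, for a reflex region between two rays, one can have $v\in K$ with $v+K\not\subset K$, and the equivalence (1) $\Leftrightarrow$ (3) would break).
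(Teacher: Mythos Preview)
Your argument is correct. The paper does not supply a proof for this observation; it simply states it and points to Figure~\ref{fig:basic_cones}, so there is nothing to compare against beyond the visual intuition. Your remark that convexity of $K$ is needed for the direction $(1)\Rightarrow(3)$ is a valid clarification of something the paper leaves implicit in its informal definition of \emph{cone}.
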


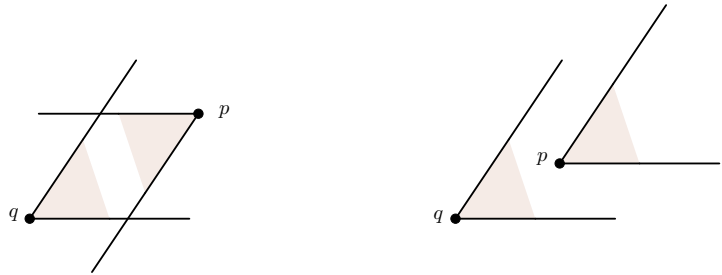
\begin{figure}[!ht]
    \centering
    \definecolor{zzttqq}{rgb}{0.6,0.2,0.}
    \scalebox{0.7}{
\begin{tikzpicture}[line cap=round,line join=round,>=triangle 45,x=0.5cm,y=0.5cm]
\clip(0.7905167827637798,-0.6536209763473118) rectangle (28.955457063301157,10.602349828485595);
\fill[line width=\lw,color=zzttqq,fill=zzttqq,fill opacity=0.10000000149011612] (21.914471376040254,4.093030278329455) -- (23.914471376040254,7.093030278329451) -- (24.914471376040254,4.093030278329455) -- cycle;
\fill[line width=\lw,color=zzttqq,fill=zzttqq,fill opacity=0.10000000149011612] (18.,2.) -- (20.,5.) -- (21.,2.) -- cycle;
\fill[line width=\lw,color=zzttqq,fill=zzttqq,fill opacity=0.10000000149011612] (2.,2.) -- (4.,5.) -- (5.,2.) -- cycle;
\fill[line width=\lw,color=zzttqq,fill=zzttqq,fill opacity=0.10000000149011612] (8.339322178085318,5.977538969999748) -- (6.339322178085318,2.977538969999748) -- (5.339322178085318,5.977538969999748) -- cycle;
\draw [line width=\lw] (21.914471376040254,4.093030278329455)-- (23.914471376040254,7.093030278329451);
\draw [line width=\lw] (23.914471376040254,7.093030278329451)-- (25.914471376040254,10.093030278329444);
\draw [line width=\lw] (21.914471376040254,4.093030278329455)-- (24.914471376040254,4.093030278329455);
\draw [line width=\lw] (24.914471376040254,4.093030278329455)-- (27.914471376040247,4.093030278329455);
\draw [line width=\lw] (18.,2.)-- (20.,5.);
\draw [line width=\lw] (20.,5.)-- (22.,8.);
\draw [line width=\lw] (18.,2.)-- (21.,2.);
\draw [line width=\lw] (21.,2.)-- (24.,2.);
\draw (16.899061667902384,2.598103922826639) node[anchor=north west] {$q$};
\draw (20.801131546911115,4.799271546882852) node[anchor=north west] {$p$};
\draw [line width=\lw] (2.,2.)-- (4.,5.);
\draw [line width=\lw] (4.,5.)-- (6.,8.);
\draw [line width=\lw] (2.,2.)-- (5.,2.);
\draw [line width=\lw] (5.,2.)-- (8.,2.);
\draw [line width=\lw] (8.339322178085318,5.977538969999748)-- (6.339322178085318,2.977538969999748);
\draw [line width=\lw] (6.339322178085318,2.977538969999748)-- (4.339322178085318,-0.022461030000251903);
\draw [line width=\lw] (8.339322178085318,5.977538969999748)-- (5.339322178085318,5.977538969999748);
\draw [line width=\lw] (5.339322178085318,5.977538969999748)-- (2.339322178085318,5.977538969999748);
\draw (8.844789225333082,6.550200338745749) node[anchor=north west] {$p$};
\draw (0.9405963934948848,2.6481304597370077) node[anchor=north west] {$q$};
\begin{scriptsize}
\draw [fill=black] (21.914471376040254,4.093030278329455) circle (2.5pt);
\draw [fill=black] (18.,2.) circle (2.5pt);
\draw [fill=black] (2.,2.) circle (2.5pt);
\draw [fill=black] (8.339322178085318,5.977538969999748) circle (2.5pt);
\end{scriptsize}
\end{tikzpicture}}\caption{Basic properties of cones.}
    \label{fig:basic_cones}
\end{figure}

For a cone $K$ let $\prec_K$ denote the relation on the points of the plane where a point $p$ is bigger than a point $q$ if and only if $p+K$ contains $q$. By Observation \ref{obs:cones}, this relation is transitive so it is a quasi-order. Recall that when $\prec_K$ is interpreted as a digraph, $qp$ is an edge if and only if $q \prec_K p$.

\begin{figure}[!ht]
    \centering
    \definecolor{qqttcc}{rgb}{0.,0.2,0.8}
\definecolor{yqqqqq}{rgb}{0.5019607843137255,0.,0.}
\definecolor{qqwuqq}{rgb}{0.,0.39215686274509803,0.}
\definecolor{qqttzz}{rgb}{0.,0.2,0.6}
\scalebox{0.7}{
\begin{tikzpicture}[line cap=round,line join=round,>=triangle 45,x=0.9cm,y=0.9cm]
\clip(2.8081291197505673,2.5852872443375357) rectangle (19.480030573405248,6.726256287901889);
\draw [shift={(14.,6.)},line width=\lw,color=qqttzz,fill=qqttzz,fill opacity=1.0] (0,0) -- (-135.:0.5401263969866527) arc (-135.:-71.56505117707799:0.5401263969866527) -- cycle;
\draw [shift={(15.,3.)},line width=\lw,color=qqwuqq,fill=qqwuqq,fill opacity=1.0] (0,0) -- (108.43494882292202:0.5401263969866527) arc (108.43494882292202:161.56505117707798:0.5401263969866527) -- cycle;
\draw [shift={(12.,4.)},line width=\lw,color=yqqqqq,fill=yqqqqq,fill opacity=1.0] (0,0) -- (-18.43494882292201:0.5401263969866527) arc (-18.43494882292201:45.:0.5401263969866527) -- cycle;
\draw [shift={(3.,4.)},line width=\lw,color=yqqqqq,fill=yqqqqq,fill opacity=1.0] (0,0) -- (-18.43494882292201:0.5401263969866527) arc (-18.43494882292201:45.:0.5401263969866527) -- cycle;
\draw [line width=\lw] (12.,4.)-- (14.,6.);
\draw [line width=\lw] (14.,6.)-- (15.,3.);
\draw [line width=\lw] (15.,3.)-- (12.,4.);
\draw [line width=\lw,color=qqttcc] (16.4144675126188,5.222364386041088)-- (16.,4.);
\draw [line width=\lw,color=qqttcc] (17.39966534681431,6.059782545107268)-- (17.8,4.2);
\draw [line width=\lw,color=qqttcc] (17.39966534681431,6.059782545107268)-- (17.,3.);
\draw [line width=\lw,color=qqttcc] (18.2124535600256,5.444033898735077)-- (17.8,4.2);
\draw [line width=\lw,color=qqttzz] (17.8,4.2)-- (17.,3.);
\draw [line width=\lw,color=qqwuqq] (19.185336421293663,3.732252661820378)-- (17.39966534681431,6.059782545107268);
\draw [line width=\lw,color=qqwuqq] (19.185336421293663,3.732252661820378)-- (18.2124535600256,5.444033898735077);
\draw [line width=\lw,color=qqwuqq] (17.8,4.2)-- (16.4144675126188,5.222364386041088);
\draw [line width=\lw,color=qqwuqq] (17.,3.)-- (16.,4.);
\draw [line width=\lw,color=yqqqqq] (16.4144675126188,5.222364386041088)-- (17.39966534681431,6.059782545107268);
\draw [line width=\lw,color=yqqqqq] (16.,4.)-- (18.2124535600256,5.444033898735077);
\draw [line width=\lw,color=yqqqqq] (17.8,4.2)-- (19.185336421293663,3.732252661820378);
\draw [line width=\lw,color=yqqqqq] (17.,3.)-- (19.185336421293663,3.732252661820378);
\draw [line width=\lw,color=yqqqqq] (16.,4.)-- (17.8,4.2);
\draw [line width=\lw,color=qqwuqq] (18.2124535600256,5.444033898735077)-- (17.39966534681431,6.059782545107268);
\draw [line width=\lw,color=qqwuqq] (16.4144675126188,5.222364386041088)-- (19.185336421293663,3.732252661820378);
\draw [line width=\lw,color=qqttcc] (17.,3.)-- (18.2124535600256,5.444033898735077);
\draw [line width=\lw,color=yqqqqq] (16.4144675126188,5.222364386041088)-- (18.2124535600256,5.444033898735077);
\draw [line width=\lw,color=qqttcc] (16.,4.)-- (17.39966534681431,6.059782545107268);
\draw [line width=\lw,color=yqqqqq] (16.,4.)-- (19.185336421293663,3.732252661820378);
\draw [line width=\lw,color=qqttcc] (17.,3.)-- (16.4144675126188,5.222364386041088);
\draw [line width=\lw] (3.,4.)-- (5.,6.);
\draw [line width=\lw] (6.,3.)-- (3.,4.);
\draw [line width=\lw,color=yqqqqq] (8.399665346814308,6.059782545107264)-- (7.414467512618802,5.222364386041083);
\draw [line width=\lw,color=yqqqqq] (7.804180545110583,5.553620463659098) -- (7.802122827418463,5.764536527101339);
\draw [line width=\lw,color=yqqqqq] (7.804180545110583,5.553620463659098) -- (8.012010032014645,5.517610404047007);
\draw [line width=\lw,color=yqqqqq] (9.212453560025601,5.444033898735072)-- (7.414467512618802,5.222364386041083);
\draw [line width=\lw,color=yqqqqq] (8.17944361442798,5.316676508181941) -- (8.293633375274837,5.494019448661143);
\draw [line width=\lw,color=yqqqqq] (8.17944361442798,5.316676508181941) -- (8.333287697369565,5.172378836115012);
\draw [line width=\lw,color=yqqqqq] (8.8,4.2)-- (7.,4.);
\draw [line width=\lw,color=yqqqqq] (7.765794289841775,4.085088254426863) -- (7.882105905312236,4.26104685218987);
\draw [line width=\lw,color=yqqqqq] (7.765794289841775,4.085088254426863) -- (7.917894094687763,3.938953147810129);
\draw [line width=\lw,color=yqqqqq] (10.185336421293664,3.732252661820378)-- (8.8,4.2);
\draw [line width=\lw,color=yqqqqq] (9.36473230652311,4.009322826499032) -- (9.544504005353444,4.119649415858658);
\draw [line width=\lw,color=yqqqqq] (9.36473230652311,4.009322826499032) -- (9.440832415940221,3.81260324596172);
\draw [line width=\lw,color=yqqqqq] (10.185336421293664,3.732252661820378)-- (7.,4.);
\draw [line width=\lw,color=yqqqqq] (8.458111127749135,3.8774366906380453) -- (8.606240642320259,4.027594830387427);
\draw [line width=\lw,color=yqqqqq] (8.458111127749135,3.8774366906380453) -- (8.579095778973405,3.704657831432951);
\draw [line width=\lw,color=yqqqqq] (10.185336421293664,3.732252661820378)-- (8.,3.);
\draw [line width=\lw,color=yqqqqq] (8.96463306203684,3.323224891359415) -- (9.041186483185905,3.5197685092421795);
\draw [line width=\lw,color=yqqqqq] (8.96463306203684,3.323224891359415) -- (9.144149938107761,3.2124841525781975);
\begin{scriptsize}
\draw [fill=black] (16.4144675126188,5.222364386041088) circle (2.5pt);
\draw [fill=black] (16.,4.) circle (2.5pt);
\draw [fill=black] (17.,3.) circle (2.5pt);
\draw [fill=black] (19.185336421293663,3.732252661820378) circle (2.5pt);
\draw [fill=black] (17.8,4.2) circle (2.5pt);
\draw [fill=black] (18.2124535600256,5.444033898735077) circle (2.5pt);
\draw [fill=black] (17.39966534681431,6.059782545107268) circle (2.5pt);
\draw [fill=black] (7.414467512618802,5.222364386041083) circle (2.5pt);
\draw [fill=black] (7.,4.) circle (2.5pt);
\draw [fill=black] (8.,3.) circle (2.5pt);
\draw [fill=black] (10.185336421293664,3.732252661820378) circle (2.5pt);
\draw [fill=black] (8.8,4.2) circle (2.5pt);
\draw [fill=black] (9.212453560025601,5.444033898735072) circle (2.5pt);
\draw [fill=black] (8.399665346814308,6.059782545107264) circle (2.5pt);
\end{scriptsize}
\end{tikzpicture}}\caption{Quasi-order on a point set.}
    \label{fig:ordering}
\end{figure}
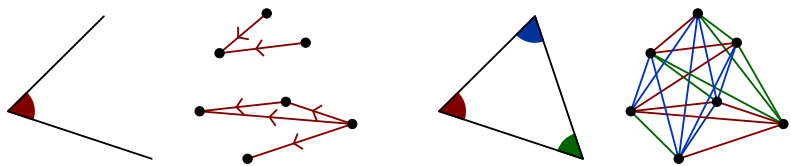

Suppose the cones $K_1, K_2, K_3$ are the translates of the three corners of a triangle so that all their apexes are in the origin, in other words the cones $K_1,-K_3,K_2,-K_1,K_3,-K_2$ partition the plane around the origin in this order. Then we will say that $K_1, K_2, K_3$  is a \emph{set of tri-partition} cones. In this case the intersection of any translates of $K_1, K_2, K_3$ forms a (possibly degenerate) triangle.

\begin{obs} 
Let $K_1,K_2,K_3$ be a set of tri-partition cones and let $P$ be a planar point set. Then any two distinct points of $P$ are comparable in either $\prec_{K_1}$, $\prec_{K_2}$ or $\prec_{K_3}$. (See Figure \ref{fig:ordering}.)
\end{obs}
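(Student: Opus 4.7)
The plan is to unpack the definition of tri-partition cones and translate the question of comparability of two points $p,q\in P$ into a question about which of the six cones around the origin contains the vector $q-p$. By the tri-partition assumption, the cones $K_1,-K_3,K_2,-K_1,K_3,-K_2$ partition $\mathbb{R}^2$ around the origin (up to shared boundary rays), so every nonzero vector of the plane lies in one of $\pm K_1$, $\pm K_2$, $\pm K_3$. This covering property is the only combinatorial fact that needs to be used.

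Concretely, let $p,q\in P$ be distinct. The vector $q-p$ is nonzero, hence it lies in $\varepsilon K_i$ for some $i\in\{1,2,3\}$ and some sign $\varepsilon\in\{+,-\}$. If $q-p\in K_i$, then $q\in p+K_i$, which is precisely the defining condition of $p\prec_{K_i}q$, so $p$ and $q$ are comparable under $\prec_{K_i}$. If instead $q-p\in -K_i$, then $p-q\in K_i$, so $p\in q+K_i$, giving $q\prec_{K_i}p$; comparability follows (and one can also phrase this via Observation \ref{obs:cones}, noting that $p\in q+(-K_i)$ is equivalent to $q\in p+K_i$ under replacing $K_i$ by $-K_i$, but the cleanest phrasing is the direct one above). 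Either way $p$ and $q$ are comparable in $\prec_{K_i}$.

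There is no real obstacle here: the statement is essentially a restatement of the fact that the six cones tile the plane. The only care needed is to handle the case when $q-p$ lies on a shared boundary of two adjacent cones from the list $K_1,-K_3,K_2,-K_1,K_3,-K_2$, which is harmless because the cones are taken to be closed, so such a vector still lies in (at least) one $\pm K_i$ and the comparability assertion is symmetric with respect to which of the two cones we pick. This observation is then ready to be used to decompose the edge set of the complete graph on $P$ into three comparability relations, as needed for the subsequent applications of the Erdős–Sands–Sauer–Woodrow style results.
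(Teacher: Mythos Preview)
Your argument is correct and is exactly the intended one: the paper states this as an observation without proof, since it is immediate from the definition of tri-partition cones that $\pm K_1,\pm K_2,\pm K_3$ cover the plane. One small slip: by the paper's convention, $q\in p+K_i$ means $p$ is \emph{bigger} than $q$, i.e., $q\prec_{K_i} p$ rather than $p\prec_{K_i} q$; this does not affect your conclusion, since comparability is all that is claimed.
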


In other words, when interpreted as digraphs, the union of $\prec_{K_1}$, $\prec_{K_2}$ and $\prec_{K_3}$ forms a complete multidigraph on $P$. As a warm up for the proof of Theorem \ref{thm:general_three_col}, we show the following theorem.

\begin{theorem}\label{thm:three_cones}
There exists a positive integer $m$ such that for any point set $P$, and any set of tri-partition cones $K_1,K_2,K_3$, we can three-color $P$ such that no translate of $K_1$, $K_2$ or $K_3$ that contains at least $m$ points of $P$ is monochromatic. 
\end{theorem}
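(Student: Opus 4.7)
The plan is to combine Theorem~\ref{thm:multi_essw_new} with Corollary~\ref{cor:pseudohalfplane}, after first establishing a key geometric fact: for any cone $K$ of angle strictly between $0$ and $\pi$, the translates of $K$ form a pseudohalfplane family. Indeed, the boundary of a translate $q+K$ is a ``V-shape'' of two rays emanating from $q$ in the two bounding directions of $K$, a simple curve dividing the plane into two unbounded regions. Writing the displacement $q_1-q_2$ in the basis of these two directions and examining the four possible ray--ray intersections of two distinct V-shapes, one finds that at most one of them can actually occur when $q_1\ne q_2$ (two intersections would force both coordinates of $q_1-q_2$ in this basis to vanish). Hence the translates of $K$ form a pseudohalfplane family, which is what will bring Corollary~\ref{cor:pseudohalfplane} into play.

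Next, I would apply Theorem~\ref{thm:multi_essw_new} with $k=3$ and $l=2$ to the complete multidigraph on $P$ given by the three quasi orders $\prec_{K_1},\prec_{K_2},\prec_{K_3}$. This produces pairwise disjoint sets $S_i^j\subseteq P$ for $i\in[3]$ and $j\in[2]$ with $|S|:=\lvert\bigcup_{i,j}S_i^j\rvert\le f(3,2)$, and for each $v\in P\setminus S$ an index $i(v)\in[3]$ such that $v+K_{i(v)}$ meets both $S_{i(v)}^1$ and $S_{i(v)}^2$. I would then partition $P\setminus S=P_1\sqcup P_2\sqcup P_3$ according to $i(v)$.

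The coloring is defined in two pieces. On $S$, color every $S_i^1$ with $1$ and every $S_i^2$ with $2$. For each $j\in[3]$, apply Corollary~\ref{cor:pseudohalfplane} to $P_j$ with the two pseudohalfplane families given by translates of $K_i$ and $K_k$ (where $\{i,k\}=[3]\setminus\{j\}$); this gives a $3$-coloring of $P_j$ in which no translate of $K_i$ or $K_k$ intersecting $P_j$ in at least $5$ points is monochromatic. To verify correctness with $m:=f(3,2)+10$, consider any translate $T=q+K_{i_0}$ with $|T\cap P|\ge m$. If $T$ contains some $v\in P_{i_0}$, then because $v\in q+K_{i_0}$ and $K_{i_0}$ is a convex cone we have $v+K_{i_0}\subseteq T$, and the defining property of $i(v)=i_0$ plants one point of $S_{i_0}^1$ (color $1$) and one of $S_{i_0}^2$ (color $2$) inside $T$. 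Otherwise $T\cap(P\setminus S)\subseteq P_j\cup P_k$ where $\{j,k\}=[3]\setminus\{i_0\}$; since $i_0$ is one of the two cones handled by the Corollary coloring on $P_j$ (and on $P_k$), if either $|T\cap P_j|\ge 5$ or $|T\cap P_k|\ge 5$ that intersection is already bichromatic, and otherwise $|T|\le|S|+8<m$, a contradiction.

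The main obstacle, I expect, is the first step---recognising cone translates as pseudohalfplanes---which is what unlocks Corollary~\ref{cor:pseudohalfplane}. After that bridge is built, the remaining argument is really a bookkeeping exercise: Theorem~\ref{thm:multi_essw_new} supplies a constant-size ``core'' $S$ whose two layers $S_i^1,S_i^2$ automatically inject two colors into any translate whose cone matches the dominating direction of some outside point it contains, while the three auxiliary Corollary colorings (one per residual class $P_j$) cover the remaining translates of every cone other than $K_j$.
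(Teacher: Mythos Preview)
Your proposal is correct and follows essentially the same route as the paper: apply Theorem~\ref{thm:multi_essw_new} with $k=3$, $l=2$, color the core $S$ with two colors, partition the rest into $P_1,P_2,P_3$ by dominating direction, and on each $P_j$ invoke Corollary~\ref{cor:pseudohalfplane} for the two cones other than $K_j$. The only differences are cosmetic: the paper pigeonholes first (hence $m=f(3,2)+13$) while your direct case split on whether $T\cap P_{i_0}\ne\emptyset$ yields the slightly smaller $m=f(3,2)+10$, and you spell out the pseudohalfplane verification that the paper states in one line.
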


\begin{proof}
We set $m$ to be $f(3,2)+13$  with the function of Theorem \ref{thm:multi_essw_new}. 
Consider the three quasi-orders $\prec_{K_1}$, $\prec_{K_2}$ or $\prec_{K_3}$.  Their union gives a complete multidigraph on $P$, hence we can apply Theorem \ref{thm:multi_essw_new} with $k=3$ and $l=2$, resulting in subsets $S_i^j$ for $i\in[3],j\in [2]$. Let $S=\bigcup\limits_{i\in [3],j\in[2]}S_i^j$. For each point $p\in P\setminus S$ there is an $i$ such that $\prec_{K_i}$ has an edge from a vertex of $S_{i,1}$ and $S_{i,2}$ to $p$. Let $P_1,P_2,P_3$ be the partition of $P\setminus S$ according to this $i$ value. 

We start by coloring the points of $S$. Color the points of $S_{1,1}\cup S_{2,1} \cup S_{3,1}$ with the first color and color the points of $S_{1,2}\cup S_{2,2}\cup S_{3,2}$ with the second color.

Any translate of $K_1$, $K_2$ or $K_3$ that contains $f(3,2)+13$ points of $P$, must contain $5$ points from either $P_1,P_2$ or $P_3$ by the pigeonhole principle. (Note that the cone might contain all points of $S$.) Therefore, it is enough to show that for each $i\in [3]$ the points of $P_i$ can be three-colored such that no translate of $K_1$, $K_2$, or $K_3$ that contains at least $5$ points of $P_i$ is monochromatic. 

Consider $P_1$; the proof is the same for $P_2$ and $P_3$. Take a translate of $K_1$ and suppose that it contains a point $p$ of $P_1$. By Theorem \ref{thm:multi_essw_new}, there is an edge of $\prec_{K_1}$ from a vertex of $S_{1,1}$ to $p$ and another edge from a vertex of $S_{1,2}$ to $p$. Thus any such translate contains a point from $S_{1,1}$ and another point from $S_{1,2}$, and hence it cannot be monochromatic. 

Therefore, we only have to consider the translates of $K_2$ and $K_3$. Two translates of a cone intersect at most once on their boundary. Hence, the translates of $K_2$ form a pseudohalfplane arrangement, and so do the translates of $K_3$. Therefore, by Corollary \ref{cor:pseudohalfplane}, there is a proper three-coloring for the translates of $K_2$ and $K_3$ together. 
\end{proof}

\begin{remark}
    From Theorem \ref{thm:three_cones}, it follows using standard methods (see Section \ref{sec:proofend}) that Theorem \ref{thm:general_three_col} holds for triangles. 
    This was of course known before, even for two-colorings of homothetic copies of triangles.
    Our proof cannot be modified for homothets, but a two-coloring would follow if instead of Corollary \ref{cor:pseudohalfplane} we applied a more careful analysis for the two cones.
\end{remark}

\subsection{Proof of Theorem \ref{thm:general_three_col}}\label{sec:proofend}

%
%
%
%
%
%
%

If $C$ is a parallelogram, then our proof method fails.
Luckily, translates of parallelograms (and other symmetric polygons) were the first for which it was shown that even two colors are enough \cite{Pach86}; in fact, by now we know that two colors are enough even for homothets of parallelograms \cite{homotsquare}.
So from now on we assume that $C$ is not a parallelogram. 


The proof of Theorem \ref{thm:general_three_col} relies on the same ideas as we used for Theorem \ref{thm:three_cones}. We partition $P$ into several parts, and for each part $P_i$, we divide the translates of $C$ into three families such that two of the families each form a pseudohalfplane arrangement over $P_i$, while the third family will only contain translates that are automatically non-monochromatic. Then Corollary \ref{cor:pseudohalfplane} gives us a proper three-coloring. As in the proof of Theorem \ref{thm:three_cones}, this is not done directly. First, we divide the plane using a grid, and then in each small square we will use Theorem \ref{thm:multi_essw_new} to discard some of the translates of $C$ at the cost of a bounded number of points.

\smallskip
Now we start the proof of Theorem \ref{thm:general_three_col}.
The first step is a classic divide and conquer idea \cite{Pach86}. We chose a constant $r=r(C)$ depending only on $C$ and divide the plane into a grid of squares of side length $r$. Since each translate of $C$ intersects some bounded number of squares, by the pigeonhole principle we can find for any positive integer $m$ another integer $m'$ such that the following holds: each translate $\hat C$ 
of $C$ that contains at least $m'$ points intersects a square $Q$ such that $\hat C\cap Q$ contains at least $m$ points.
For example, we can choose $m'=m(diam(C)/r+2)^2$, where $diam(C)$ denotes the diameter of $C$.
Therefore, it is enough to show the following localized version of Theorem \ref{thm:general_three_col}, since applying it separately for the points in each square of the grid provides a proper three-coloring of the whole point set.

\begin{theorem}\label{thm:local_three_col}
There is a positive integer $m$ such that for any convex body $C$ there is a positive real $r$ such that any finite point set $P$ in the plane that lies in a square of side length $r$ can be three-colored in a way that there is no translate of $C$ containing at least $m$ points of $P$, all of the same color.
\end{theorem}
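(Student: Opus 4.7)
The plan is to mimic the argument of Theorem~\ref{thm:three_cones}, replacing the tri-partition cones used there by cones adapted to the illumination structure of $C$. Since $C$ is not a parallelogram, I would apply Lemma~\ref{lemma:our_illumination_epsilon} to fix parameters $0\leq t_1<t_2<t_3\leq 1$ and $\varepsilon>0$ such that each of the three padded arcs $J_i := \gamma_{[t_i-\varepsilon,\, t_{i+1}+\varepsilon]}$ (indices mod~$3$) satisfies $|g(J_i)|<\pi$.

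Next, I would fix $r>0$ small enough (depending on $C$ and $\varepsilon$) that for any square $Q$ of side $r$ and any translate $\hat C$ of $C$ that cuts $Q$, the arc $\partial\hat C\cap Q$ is a single short arc whose parameter range fits into some $(t_i-\varepsilon,\,t_{i+1}+\varepsilon)$; call such a $\hat C$ a translate of \emph{type~$i$}. By the lemma preceding Lemma~\ref{lemma:our_illumination}, any two type-$i$ translates' boundaries inside $Q$ cross at most once, so the type-$i$ translates form a pseudohalfplane arrangement over $P\cap Q$. Translates of $C$ that fully contain $Q$ are automatically non-monochromatic as long as the final coloring uses at least two colors, and translates disjoint from $Q$ are irrelevant.

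The main step is to define, for each $i$, a closed convex cone $K_i$ adapted to $J_i$, together with the quasi-order $\prec_i := \prec_{K_i}$ on $P\cap Q$, so that two properties hold: \textbf{(a)} the three orders $\prec_1,\prec_2,\prec_3$ together form a complete multidigraph on $P\cap Q$ (so that Theorem~\ref{thm:multi_essw_new} applies); and \textbf{(b)} if $s-p\in K_i$ and $\hat C$ is any type-$i$ translate of $C$ containing $p$, then $s\in\hat C$ as well. The natural candidate is $K_i$ equal to the polar cone of $g(J_i)$: for $u\in g(J_i)$ and $v\in K_i$ one has $\langle v,u\rangle\le 0$, so the tangent half-plane inequalities of $\hat C$ at boundary points in the $J_i$-arc are propagated from $p$ to $s=p+v$. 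For tangent half-planes of $\hat C$ with outer normal $u\notin g(J_i)$, the tangent point lies outside the $J_i$-arc of $\hat C$ and, once $r$ is small enough, is far enough from $Q$ that $Q$ lies entirely on the inward side, so $s\in Q$ satisfies these as well; this yields \textbf{(b)}. Property \textbf{(a)} is the subtler one, and I expect it to be the main obstacle: the polar cones of the three padded Gauss images $g(J_i)$ have total angular measure $3\pi-\sum|g(J_i)|<\pi$ and so together with their negatives may fail to cover $\mathbb R^2$. I would overcome this either by defining the $K_i$ via the strict arcs $\gamma_{[t_i,t_{i+1}]}$ (whose polar cones do tri-partition the plane) while keeping the padded arcs for the pseudohalfplane classification, arguing that the smallness of $r$ makes the extra ``padding'' tangent lines harmless, or by enlarging $k$ in Theorem~\ref{thm:multi_essw_new} (say to $k=6$) with auxiliary quasi-orders that cover the missing directions.

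Once \textbf{(a)} and \textbf{(b)} are secured, I would apply Theorem~\ref{thm:multi_essw_new} with $l=2$ to obtain pairwise disjoint witness sets $S_i^j\subseteq P\cap Q$ of total bounded size, such that every remaining point $p$ has an index $i$ for which $\prec_i$ admits edges from both $S_i^1$ and $S_i^2$ to $p$. I would color $\bigcup_i S_i^1$ with color~$1$ and $\bigcup_i S_i^2$ with color~$2$, and partition the remaining points into $P_1,P_2,P_3$ according to that index. For each $P_i$, property~\textbf{(b)} guarantees that every type-$i$ translate containing a point of $P_i$ also contains a witness of each of colors~$1$ and~$2$, hence is non-monochromatic; the type-$j$ and type-$k$ translates with $j,k\ne i$ restrict to two pseudohalfplane arrangements on $P_i$, and Corollary~\ref{cor:pseudohalfplane} then yields a proper $3$-coloring of $P_i$ under which every such $5$-heavy translate is non-monochromatic. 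Setting $m:=f(k,2)+13$ and using pigeonhole as in the proof of Theorem~\ref{thm:three_cones} finishes the argument.
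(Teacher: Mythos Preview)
Your plan is essentially the paper's proof, and the paper resolves the obstacle you correctly flag at property~\textbf{(a)} exactly via your first option. The cones are taken to be the \emph{tri-partition} cones $K_{1,2},K_{2,3},K_{3,1}$ bounded by the tangent lines $\ell_1,\ell_2,\ell_3$ to $C$ at $\gamma(t_1),\gamma(t_2),\gamma(t_3)$, so~\textbf{(a)} is automatic. The substitute for your global~\textbf{(b)} is a \emph{local} containment statement (the paper's Property~(A)): if $\partial\hat C\cap Q\subset\hat\gamma_{[t_i+\varepsilon/2,\,t_{i+1}-\varepsilon/2]}$ and $K$ is a translate of $K_{i,i+1}$ with apex in $Q\cap\hat C$, then $K\cap Q\subset\hat C$. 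The reason is not a ``padding tangent lines'' argument: the walls of $K$ are parallel to $\ell_i$ and $\ell_{i+1}$, so the part of $K$ on the $C$-side of the chord $\overline{\hat\gamma(t_i)\hat\gamma(t_{i+1})}$ lies in $\hat C$ by convexity, and $r$ is chosen small enough that $Q$ cannot reach this chord. Note that Property~(A) requires the \emph{narrow} arc $[t_i+\varepsilon/2,t_{i+1}-\varepsilon/2]$, so the classification of translates is done separately for each $P_i$ and is asymmetric: for $P_1$, say, one family $\mathcal{C}_1$ has boundary in the narrow arc (handled by the cone), while the remaining translates split as $\mathcal{C}_2$ (boundary meeting $\hat\gamma_{[t_2-\varepsilon/2,t_3]}$) and $\mathcal{C}_3$ (boundary meeting $\hat\gamma_{[t_3,t_1+\varepsilon/2]}$), and a second smallness condition on~$r$ (Property~(B)) then confines their boundaries to the padded arcs, yielding exactly two pseudohalfplane families for Corollary~\ref{cor:pseudohalfplane}.

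Your option~(2) does not work as written: enlarging $k$ in Theorem~\ref{thm:multi_essw_new} would partition the remaining points into more than three classes $P_i$, and for each $P_i$ the translates not handled by the cone argument would no longer fall into only two pseudohalfplane families, so Corollary~\ref{cor:pseudohalfplane} (which is limited to $k-1=2$ families) would not close the argument.
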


We will show that $m$ can be chosen to be $f(3,2)+13$ with the function of Theorem \ref{thm:multi_essw_new}, independently of $C$.

\begin{proof}
We pick $r$ the following way. First we fix an injective parametrization $\gamma$ of $\partial C$ and then fix $t_1,t_2,t_3$ and $\varepsilon$ according to Lemma \ref{lemma:our_illumination_epsilon}. Let $\ell_1,\ell_2,\ell_3$ be the tangents of $C$ touching at $\gamma(t_1),\gamma(t_2)$ and $\gamma(t_3)$. Let $K_{1,2}$, $K_{2,3}$, $K_{3,1}$ be the set of tri-partition cones bordered by $\ell_1,\ell_2,\ell_3$, such that $K_{i,i+1}$ is bordered by $\ell_i$ on its counterclockwise side, and by $\ell_{i+1}$ on its clockwise side (see Figure \ref{fig:cone_in_C} left, and note that we always treat $3+1$ as 1 in the subscript). 

For a translate $\hat{C}$ of $C$ we will denote by $\hat{\gamma}$ the translated parametrization of $\partial \hat{C}$, i.e., $\hat{\gamma}(t)=\gamma(t)+v$ if $\hat{C}$ was translated by vector $v$.  Our aim is to choose $r$ small enough to satisfy the following two properties for each $i\in [3]$.

\begin{enumerate}[label=(\Alph*)]
    \item Let $\hat C$ be a translate of $C$, and $Q$ be a square of side length $r$ such that $\partial \hat C\cap Q\subset \hat{\gamma}_{[t_i+\varepsilon/2,t_{i+1}-\varepsilon/2]}$ (see Figure \ref{fig:cone_in_C} right). Then for any translate $K$ of $K_{i,i+1}$ whose apex is in $Q\cap \hat C$, we have $K\cap Q\subset \hat  C$. (I.e., $r$ is small with respect to $C$.)
    \item  Let $\hat  C$ be a translate of $C$, and $Q$ be a square of side length $r$ such that $\hat{\gamma}_{[t_i-\varepsilon/2,t_{i+1}+\varepsilon/2]}$ intersects $Q$. Then $\partial \hat C\cap Q\subset \hat{\gamma}_{[t_i-\varepsilon,t_{i+1}+\varepsilon]}$. (I.e., $r$ is small compared to $\varepsilon$.)
\end{enumerate}

\begin{figure}[!ht]
    \centering
  \definecolor{zzttqq}{rgb}{0.6,0.2,0.}
  \definecolor{uuuuuu}{rgb}{0.26666666666666666,0.26666666666666666,0.26666666666666666}

\begin{tikzpicture}[line cap=round,line join=round,>=triangle 45,x=0.5cm,y=0.5cm]
\clip(2.1640924696902344,-3.291941380065454) rectangle (16.64624177595318,6.606761736183365);
\fill[line width=\lw,fill=black,fill opacity=0.25] (3.768827753322032,4.392669166650977) -- (4.123243589875512,3.4575812484314694) -- (4.758793204447675,4.5339787755868235) -- cycle;
\fill[line width=\lw,fill=black,fill opacity=0.25] (14.058734708892569,5.861470654848949) -- (13.068769257766968,5.720161045913108) -- (13.374479808664983,5.132227738178157) -- cycle;
\fill[line width=\lw,fill=black,fill opacity=0.25] (6.332889037089297,-2.3723296870708763) -- (7.017143937316888,-1.6430867704000818) -- (5.978473200535815,-1.4372417688513646) -- cycle;
\draw [shift={(7.958515351695592,2.108914472950761)},line width=\lw]  plot[domain=2.6956780077804776:4.321854967035546,variable=\t]({1.*3.1083274241025274*cos(\t r)+0.*3.1083274241025274*sin(\t r)},{0.*3.1083274241025274*cos(\t r)+1.*3.1083274241025274*sin(\t r)});
\draw [shift={(7.261346221122771,2.5938329918446867)},line width=\lw]  plot[domain=0.13035761915140343:2.755875028289039,variable=\t]({1.*2.2743120841793814*cos(\t r)+0.*2.2743120841793814*sin(\t r)},{0.*2.2743120841793814*cos(\t r)+1.*2.2743120841793814*sin(\t r)});
\draw [shift={(6.496593035223344,2.298949087855251)},line width=\lw]  plot[domain=-1.4801162709845777:0.19311405339801058,variable=\t]({1.*3.0769654110024027*cos(\t r)+0.*3.0769654110024027*sin(\t r)},{0.*3.0769654110024027*cos(\t r)+1.*3.0769654110024027*sin(\t r)});
\draw [line width=\lw,domain=2.1640924696902344:16.64624177595318] plot(\x,{(--12.223776958212898--0.4526542136088514*\x)/3.17113631658728});
\draw [line width=\lw,domain=2.1640924696902344:16.64624177595318] plot(\x,{(--18.39532881276564-3.3853951579956414*\x)/1.2831281782249193});
\draw [line width=\lw,domain=2.1640924696902344:16.64624177595318] plot(\x,{(-21.960768293888048--2.565850114616926*\x)/2.407559228948587});
\draw (9.4,6.6) node[anchor=north west] {$\ell_1$};
\draw (4.6,-0.1) node[anchor=north west] {$\ell_2$};
\draw (6.6,2.6) node[anchor=north west] {$C$};
\draw (10.94582130433904,2.4) node[anchor=north west] {$\ell_3$};
\draw [shift={(3.768827753322032,4.392669166650977)},line width=\lw,fill=black,fill opacity=0.25]  plot[domain=-1.2085070485393068:0.14178417369315438,variable=\t]({1.*1.*cos(\t r)+0.*1.*sin(\t r)},{0.*1.*cos(\t r)+1.*1.*sin(\t r)});
\draw [shift={(6.332889037089297,-2.3723296870708763)},line width=\lw,fill=black,fill opacity=0.25]  plot[domain=0.817214862644781:1.9330856050504859,variable=\t]({1.*1.*cos(\t r)+0.*1.*sin(\t r)},{0.*1.*cos(\t r)+1.*1.*sin(\t r)});
\draw [shift={(14.058734708892569,5.861470654848949)},line width=\lw,fill=black,fill opacity=0.4000000059604645]  plot[domain=3.283376827282948:3.958807516234576,variable=\t]({1.*1.*cos(\t r)+0.*1.*sin(\t r)},{0.*1.*cos(\t r)+1.*1.*sin(\t r)});
\draw (13.6,5.5) node[anchor=north west] {$K_{3,1}$};
\draw (2.203321433221302,4.026165982141844) node[anchor=north west] {$K_{1,2}$};
\draw (6.7,-1.9) node[anchor=north west] {$K_{2,3}$};
\begin{scriptsize}
\draw [fill=uuuuuu] (6.939964069909312,4.845323380259829) circle (2.0pt);
\draw [fill=uuuuuu] (8.740448266037884,0.19352042754604953) circle (2.0pt);
\draw [fill=uuuuuu] (5.051955931546951,1.0072740086553358) circle (2.0pt);
\end{scriptsize}
\end{tikzpicture}
\begin{tikzpicture}[line cap=round,line join=round,>=triangle 45,x=0.8cm,y=0.8cm]
\clip(-0.5212593802625312,0.9024160297185335) rectangle (7.098126520651556,7.480250043437565);
\fill[line width=\lw,fill=black,fill opacity=0.30000001192092896] (2.9139611807128176,4.440100887949994) -- (3.068078600743505,3.0862602098415906) -- (4.272853164612676,4.54034726918462) -- cycle;
\fill[line width=\lw,color=zzttqq,fill=zzttqq,fill opacity=0.10000000149011612] (2.12382,3.74) -- (3.54248,3.74) -- (3.54248,5.15866) -- (2.12382,5.15866) -- cycle;
\draw [shift={(4.663491963072474,3.1523141871657336)},line width=\lw]  plot[domain=2.63100772848181:3.9408911121618377,variable=\t]({1.*2.759430143068236*cos(\t r)+0.*2.759430143068236*sin(\t r)},{0.*2.759430143068236*cos(\t r)+1.*2.759430143068236*sin(\t r)});
\draw [shift={(4.858950201988104,2.01321086543119)},line width=\lw]  plot[domain=1.0014831356942346:2.3788491897615827,variable=\t]({1.*3.6008052563532615*cos(\t r)+0.*3.6008052563532615*sin(\t r)},{0.*3.6008052563532615*cos(\t r)+1.*3.6008052563532615*sin(\t r)});
\draw [line width=\lw,domain=-0.8212593802625312:7.098126520651556] plot(\x,{(--5.714971243081739-2.5455417413714536*\x)/0.2897773217368045});
\draw [line width=\lw,domain=-0.8212593802625312:7.098126520651556] plot(\x,{(--15.596206877223619--0.21851199420715073*\x)/2.962044052434135});
\draw [shift={(2.9139611807128176,4.440100887949994)},line width=\lw,fill=black,fill opacity=0.30000001192092896]  plot[domain=-1.4574470824511945:0.07363728921063928,variable=\t]({1.*1.3625845885147592*cos(\t r)+0.*1.3625845885147592*sin(\t r)},{0.*1.3625845885147592*cos(\t r)+1.*1.3625845885147592*sin(\t r)});
\draw [line width=\lw] (2.9139611807128176,4.440100887949994)-- (3.068078600743505,3.0862602098415906);
\draw [line width=\lw] (4.272853164612676,4.54034726918462)-- (2.9139611807128176,4.440100887949994);
\draw [line width=\lw] (3.1733300410036582,2.161681526748409)-- (3.068078600743505,3.0862602098415906);
\draw [line width=\lw] (4.272853164612676,4.54034726918462)-- (5.154440683911443,4.6053825770666235);
\draw (4.3,5.9) node[anchor=north west,rotate=50] {$\hat{\gamma}(t_1)$};
\draw (0.6,3.183055506852521) node[anchor=north west] {$\hat{\gamma}(t_2)$};
\draw (6.487561621261355,6.433415706547415) node[anchor=north west] {$\ell_1$};
\draw (1.3,1.6925588406940881) node[anchor=north west] {$\ell_2$};
\draw [line width=\lw,color=zzttqq] (2.12382,3.74)-- (3.54248,3.74);
\draw [line width=\lw,color=zzttqq] (3.54248,3.74)-- (3.54248,5.15866);
\draw [line width=\lw,color=zzttqq] (3.54248,5.15866)-- (2.12382,5.15866);
\draw [line width=\lw,color=zzttqq] (2.12382,5.15866)-- (2.12382,3.74);
\draw (-0.7,3.7) node[anchor=north west] {$\hat{\gamma}(t_2-\varepsilon/2)$};
\draw (3.6,5.9) node[anchor=north west,rotate=50] {$\hat{\gamma}(t_1+\varepsilon/2)$};
\draw (4.35,3.919324944352469) node[anchor=north west] {$K$};
\begin{scriptsize}
\draw [fill=black] (1.9217694985931228,2.840204202956866) circle (2.0pt);
\draw [fill=black] (4.594036229290453,5.60425793853547) circle (2.0pt);
\draw [fill=black] (1.9127284810063392,3.370843316127941) circle (2.0pt);
\draw [fill=black] (4.030305286656739,5.517372120064994) circle (2.0pt);
\end{scriptsize}
\end{tikzpicture}

    \caption{Selecting the cones (on the left) and Property (A) (on the right).}
    \label{fig:cone_in_C}
\end{figure}
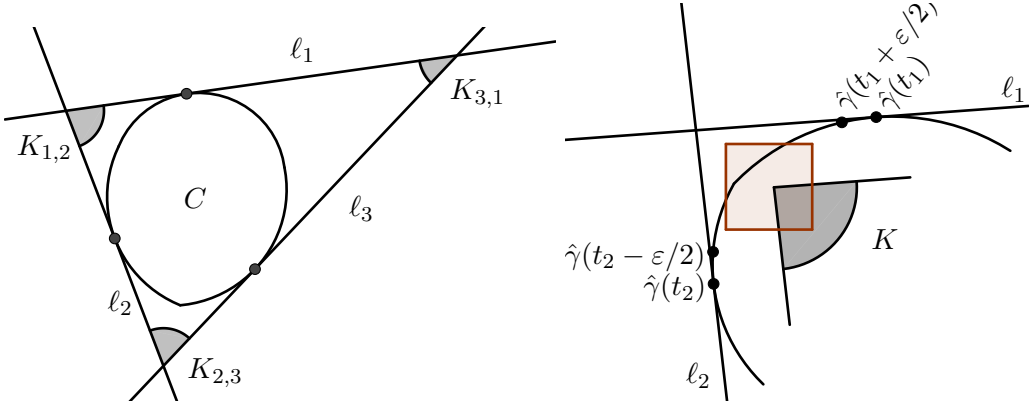

We show that an $r$ satisfying properties (A) and (B) can be found for $i=1$. The argument is the same for $i=2$ and $i=3$, and we can take the smallest among the three resulting values of $r$. 
 
First, consider property (A). Since the sides of $K$ are parallel to $\ell_1$ and $\ell_2$, the portion of $K$ that lies ``above'' the segment $\overline{\hat{\gamma}(t_1)\hat{\gamma}(t_2)}$ is in $\hat{C}$. Hence, if we choose $r$ small enough so that $Q$ cannot intersect $\overline{\hat{\gamma}(t_1)\hat{\gamma}(t_2)}$, then property (A) is satisfied.
We can choose $r$ to be smaller than $\frac{1}{\sqrt{2}}$ times the distance of the segments $\overline{\hat{\gamma}(t_1)\hat{\gamma}(t_2)}$ and $\overline{\hat{\gamma}(t_1+\varepsilon/2)\hat{\gamma}(t_2-\varepsilon/2)}$. 



Using that $\gamma$ is a continuous function on a compact set, we can pick $r$ such that property (B) is satisfied.
Therefore, there is an $r$ satisfying properties (A) and (B).


\bigskip
The next step is a subdivision of the point set $P$ using Theorem \ref{thm:multi_essw_new}, like we did in the proof of Theorem \ref{thm:three_cones}.
The beginning of our argument is exactly the same.

Apply Theorem \ref{thm:multi_essw_new} for the graph given by the union of $\prec_{K_{1,2}}$, $\prec_{K_{2,3}}$ and $\prec_{K_{3,1}}$. By Observation \ref{obs:cones}, this is indeed a complete multidigraph on $P$. 

We apply Theorem \ref{thm:multi_essw_new} with $k=3$ and $l=2$, resulting in subsets $S_i^j$ for $i\in[3],j\in [2]$. Let $S=\bigcup\limits_{i\in [3],j\in[2]}S_i^j$. For each point $p\in P\setminus S$ there is an $i$ such that $\prec_{K_{i,i+1}}$ has an edge from a vertex of $S_{i,1}$ and $S_{i,2}$ to $p$. Let $P_1,P_2,P_3$ be the partition of $P\setminus S$ according to this $i$ value. 

We start by coloring the points of $S$. Color the points of $S_{1,1}\cup S_{2,1} \cup S_{3,1}$ with the first color and color the points of $S_{1,2}\cup S_{2,2}\cup S_{3,2}$ with the second color.

Note that $m$ is at least $f(3,2)+13$. Any translate of $C$  that contains $f(3,2)+13$ points of $P$ must contain $5$ points from either $P_1,P_2$ or $P_3$. (Note that the cone might contain all points of $S$). Thus, it is enough to show that for each $i\in [3]$ the points of $P_i$ can be 3-colored so that no translate of $C$ that contains at least $5$ points of $P_i$ is monochromatic.

\smallskip

Consider $P_1$, the proof is the same for $P_2$ and $P_3$. We divide the translates of $C$ that intersect $Q$ into four (not necessarily disjoint) groups. Let $\mathcal{C}_0$ denote the translates where $\hat{C}\cap Q=Q$. Let $\mathcal{C}_1$ denote the translates for which $\partial \hat{C}\cap Q\subset \hat{\gamma}_{[t_1+\varepsilon/2,t_{2}-\varepsilon/2]}$.  Let $\mathcal{C}_2$ denote the translates for which $\partial \hat{C}\cap Q\cap \hat{\gamma}_{[t_2-\varepsilon/2,t_{3}]}\ne \emptyset$. Let $\mathcal{C}_3$ denote the remaining translates for which $\partial \hat{C}\cap Q\cap \hat{\gamma}_{[t_3,t_{1}+\varepsilon/2]}\ne \emptyset$.

We do not need to worry about the translates in $\mathcal{C}_0$, as $Q$ itself will not be monochromatic.

Take a translate $\hat C$ from $\mathcal{C}_1$ and suppose that it contains a point $p\in P_1$. By Theorem \ref{thm:multi_essw_new}, there is an edge of $\prec_{K_{1,2}}$ from a vertex of $S_{1,1}$ to $p$ and another edge from a vertex of $S_{1,2}$ to $p$. I.e., the  cone $p+K_{1,2}$  contains a point from $S_{1,1}$ and another point from $S_{1,2}$, and hence it is not monochromatic. From property (A) we know that every point in $(p+K_{1,2})\cap P$ is also in $\hat C$. Therefore, $\hat C$ is not monochromatic. 

Now consider the translates in $\mathcal{C}_2$. From property (B) we know that for these translates we have $\partial \hat C\cap Q\subset \hat{\gamma}_{[t_2-\varepsilon,t_3+\varepsilon]}$. By the definition of $t_1,t_2$ and $t_3$, we know that this implies that any two translates from $\mathcal{C}_2$ intersect at most once on their boundary within $Q$, i.e., they behave as pseudohalfplanes. To turn the translates in $\mathcal{C}_2$ into a pseudohalfplane arrangement as defined earlier, we can do as follows. For a translate $\hat{C}$, replace it with the convex set whose boundary is $\hat{\gamma}_{[t_2-\varepsilon,t_3+\varepsilon]}$  extended from its endpoints with two rays orthogonal to the segment $\overline{\hat{\gamma}(t_2-\varepsilon)\hat{\gamma}(t_3+\varepsilon)}$. This new family provides the same intersection pattern in $Q$ and forms a pseudohalfplane arrangement. We can do the same with the translates in $\mathcal{C}_3$.  Therefore, by Corollary \ref{cor:pseudohalfplane} there is a proper three-coloring for the translates in $\mathcal{C}_2\cup \mathcal{C}_3$. 
\end{proof}


 


\section{Proof of Theorem \ref{thm:multi_essw_new}}\label{sec:esswproof}

Let us quickly recap the main steps from the proof of Theorem \ref{thm:multi_essw_old} from \cite{esswproof}.
For a function $w : V \rightarrow \mathbb{R}$ and $S\subset V$ let $w(S) = \sum\limits_{x\in S}f(x)$. We say that $w: V \rightarrow [0, 1]$ is a \emph{probability distribution} on $V$ if $w(V) = 1$. 
First they carefully define a partition of the vertex set. Then for each part $P$ of the partition a probability distribution $w_P$ is defined which is concentrated on $P$. Each part of the partition is dominated independently of the other parts using a probabilistic argument. Namely, they show that we can pick some points according to $w_P$ and these points will dominate $P$ with positive probability. Also, each part is dominated using edges of just one $\prec_i$. The main reason that the proof does not immediately work for Theorem \ref{thm:multi_essw_new} is that the dominating sets for the different parts might intersect.  

Our proof of Theorem \ref{thm:multi_essw_new} follows a very similar path. We will also define a partition of the vertex set (see Figure \ref{fig:erdos}) and corresponding probability distributions.  Then we will apply the probabilistic argument for the parts simultaneously to ensure that the dominating sets of the parts are disjoint. We will apply the probabilistic argument $l$ times for each part. To be able to ensure disjointness of the $S_i^j$-s, the partition and the distributions are created a bit more carefully; we ensure that a vertex cannot have too much weight in any of the probability distributions.
This way the probability of picking any vertex in two different $S_i^j$-s will be sufficiently small.

We start with a number of useful lemmas.
The following variant of LP duality was stated in this context in \cite[Lemma 5]{MR2220666}.

\begin{lemma}[Alon et al.~\cite{MR2220666}]\label{lemma:at_least_half}
If $D=(V,A)$ is a complete multidigraph, then there exists a probability distribution $w$ on $V$ such that $w(N^-(x)) \ge 1/2$ for each $x\in V$.
\end{lemma}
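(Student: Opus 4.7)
The plan is to exhibit $w$ as the minimax-optimal row strategy of an appropriate two-player zero-sum game. I would define the payoff matrix $M$ indexed by $V\times V$ by $M_{y,x}=1$ if $y\in N^-(x)$ and $M_{y,x}=0$ otherwise, and view the row player as choosing a vertex $y$, the column player as choosing $x$, with the row player receiving $M_{y,x}$. For any probability distribution $w$ on $V$ the quantity $\sum_y w(y)M_{y,x}$ is exactly $w(N^-(x))$, so it suffices to show the value $v$ of this game satisfies $v\ge 1/2$: any minimax-optimal row strategy $w^*$ will then automatically satisfy $w^*(N^-(x))\ge v\ge 1/2$ for every $x\in V$.

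The structural input I would record first is the inequality $M_{y,x}+M_{x,y}\ge 1$ for every $(x,y)\in V\times V$. For $x=y$ this is immediate from $x\in N^-(x)$ (the ``closed'' in the closed in-neighborhood); for $x\ne y$ it follows because the multidigraph is complete, so at least one of $(x,y),(y,x)$ is an arc and hence at least one of $M_{x,y},M_{y,x}$ equals $1$. To lower-bound $v$, I would invoke LP duality (or equivalently von Neumann's minimax theorem) to fix an optimal column strategy $\sigma^*$ with $\sum_x \sigma^*(x)M_{y,x}\le v$ for every $y$, which after averaging over $y$ with weights $\sigma^*$ gives
\begin{equation*}
\sum_{y,x}\sigma^*(y)\sigma^*(x)M_{y,x}\le v.
\end{equation*}
On the other hand, symmetrizing in $x,y$ and using the inequality above yields
\begin{equation*}
2\sum_{y,x}\sigma^*(y)\sigma^*(x)M_{y,x} = \sum_{y,x}\sigma^*(y)\sigma^*(x)\bigl(M_{y,x}+M_{x,y}\bigr)\ge \sum_{y,x}\sigma^*(y)\sigma^*(x)=1,
\end{equation*}
so the same sum is at least $1/2$. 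Combining gives $v\ge 1/2$, which finishes the argument.

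There is no real obstacle here: everything reduces to the symmetrization inequality $M_{y,x}+M_{x,y}\ge 1$, and both its ingredients (closedness of $N^-$ on the diagonal, completeness of the multidigraph off the diagonal) are built into the hypotheses. The remaining steps are a textbook application of the minimax theorem together with the standard tournament-theoretic trick of letting both players play the same distribution. I would cite von Neumann's minimax theorem (or an LP-duality reference) for the existence of $v$ and $w^*$, rather than re-deriving it.
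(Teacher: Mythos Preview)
Your argument is correct. The paper does not actually supply a proof of this lemma: it merely cites it from Alon et al.\ \cite{MR2220666} as ``a variant of LP duality,'' so there is nothing to compare against beyond that description. Your approach---setting up the $0/1$ payoff matrix, using the structural inequality $M_{y,x}+M_{x,y}\ge 1$ from completeness and closedness of $N^-$, and then combining the minimax theorem with the symmetrization trick on the optimal column strategy---is exactly the standard LP-duality proof the paper is alluding to (this is the argument going back to Fisher--Ryan \cite{strategy} and used in \cite{MR2220666}).
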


We prove the following modification of Lemma \ref{lemma:at_least_half} to obtain a $w$ whose min-entropy is large. 

\begin{lemma}\label{lemma:prob_dist}
 Let $D=(V,A)$ be a complete multidigraph and let $0<\delta<1$ be a fixed number. If $|V|>\frac{1}{\delta}$, then there exists a probability distribution $w$ on $V$ such that for each $x \in V$ one of the following holds:
 \begin{itemize}
     \item $w(x)\le 2 \delta$ and $w(N^-(x)) \ge 1/2$ 
     \item  $\delta \le w(x)\le 4\delta$
 \end{itemize}
\end{lemma}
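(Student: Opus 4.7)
The goal is to strengthen Lemma \ref{lemma:at_least_half} so that, in addition to the domination bound $w(N^-(x))\ge 1/2$, the resulting distribution carries no weight larger than $4\delta$ on any single vertex. I would first observe that the disjunction in Lemma \ref{lemma:prob_dist} is equivalent to asking for a probability distribution $w$ on $V$ satisfying $w(x)\le 4\delta$ for all $x$ and $w(N^-(x))\ge 1/2$ for every $x$ with $w(x)<\delta$; any vertex whose weight lands in $[\delta,4\delta]$ automatically falls into the second bullet, and any vertex with $w(x)<\delta$ needs exactly the domination inequality to land in the first.

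Starting from the distribution $w_0$ furnished by Lemma \ref{lemma:at_least_half}, I would split into two cases. If $\max_x w_0(x)\le 4\delta$ we are already done, since $w_0(N^-(x))\ge 1/2$ for every $x$. Otherwise, let $H:=\{x:w_0(x)>2\delta\}$; because $\sum_x w_0(x)=1$ we have $|H|<1/(2\delta)$, and the hypothesis $|V|>1/\delta$ then yields $|L|>1/(2\delta)$ where $L:=V\setminus H$. Cap each heavy weight at $2\delta$---so every $x\in H$ now sits in the second bullet---and redistribute the excess mass $\gamma:=w_0(H)-2\delta|H|$ over $L$ via a second application of Lemma \ref{lemma:at_least_half} to the sub-multidigraph induced by $L$, obtaining an auxiliary distribution $\tilde w$ on $L$ with $\tilde w(N^-(x)\cap L)\ge 1/2$ for every $x\in L$. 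Finally set
\begin{equation*}
w(x):=\begin{cases}2\delta & x\in H,\\ w_0(x)+\gamma\,\tilde w(x) & x\in L.\end{cases}
\end{equation*}

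The main obstacle is to verify simultaneously that $w(x)\le 4\delta$ holds on all of $L$ and that $w(N^-(x))\ge 1/2$ still holds at every light vertex of $L$. For the cap, the combination $|L|>1/(2\delta)$ and $\gamma<1$ shows that a suitably spread-out $\tilde w$ adds at most $2\delta$ to any single weight; to rigorously guarantee this spread, I would apply the present lemma recursively to the sub-multidigraph on $L$ (with the same $\delta$, so the recursion descends on $|V|$ and terminates once $|V|\le 1/\delta$, a base case that can be handled directly by the uniform distribution). For the domination bound, the key identity is
\begin{equation*}
w(N^-(x)) \;=\; w_0(N^-(x)) - \sum_{y\in N^-(x)\cap H}\!\!(w_0(y)-2\delta) \;+\; \gamma\,\tilde w(N^-(x)\cap L),
\end{equation*}
where the subtracted term is at most $\gamma$ and the added term is at least $\gamma/2$. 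Closing the resulting gap to $1/2$ is the heart of the argument and requires distinguishing whether $x$'s in-neighbors lie mostly in $H$ or in $L$: in the former case the capped $H$-contribution already overshoots $1/2$ because each of the many heavy in-neighbors contributes $2\delta$, while in the latter the surviving $w_0$-mass on $N^-(x)\cap L$ is itself close to $1/2$ and the $\tilde w$-correction tips it over. This balancing, together with the recursive spread step, is precisely the ``extra trick'' alluded to earlier as a modification of the Bousquet-Lochet-Thomass\'e method.
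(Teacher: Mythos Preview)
Your cap-and-redistribute scheme does not establish the domination bound $w(N^-(x))\ge 1/2$, and the proposed dichotomy cannot close the gap. From your identity you only get
\[
w(N^-(x)) \;\ge\; \tfrac12 \;-\; \bigl(w_0(H')-2\delta|H'|\bigr) \;+\; \tfrac{\gamma}{2},
\qquad H':=N^-(x)\cap H,
\]
and since each summand $w_0(y)-2\delta>0$ for $y\in H$, the subtracted term can equal $\gamma$ whenever $H'=H$, leaving only $w(N^-(x))\ge \tfrac12-\tfrac{\gamma}{2}$. Now take a single heavy vertex $h$ with $w_0(h)$ close to $1$ and $h$ an in-neighbour of every $x$ (this is compatible with Lemma~\ref{lemma:at_least_half}, since $h\in N^-(x)$ alone already gives $w_0(N^-(x))\ge w_0(h)\ge 1/2$). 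Then $\gamma\approx 1-2\delta$, and for any $x\in L$ with $\tilde w(N^-(x)\cap L)$ near $1/2$ and $w_0(N^-(x)\cap L)$ small (which is forced, as $w_0(L)\approx 0$), you obtain $w(N^-(x))\approx 2\delta + 0 + \tfrac{1-2\delta}{2} = \tfrac12+\delta - \delta$... wait, more precisely $2\delta+(1-2\delta)/2=\tfrac12+\delta$, which is fine for $|H|=1$; but with $|H|=2$ and $w_0(h_1)=w_0(h_2)=0.4$, $\gamma=0.8-4\delta$, and both $h_i\in N^-(x)$, you get $w(N^-(x))\le 4\delta + 0.2 + (0.8-4\delta)/2 = 0.6+2\delta$, while if $w_0(N^-(x)\cap L)$ is near $0$ this becomes $4\delta + (0.8-4\delta)/2 = 0.4+2\delta<1/2$. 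Your two cases do not cover this: $x$ has only two heavy in-neighbours (so the capped $H$-contribution is $4\delta\ll 1/2$), yet $w_0(N^-(x)\cap L)$ is nowhere near $1/2$ because almost all of $w_0$ sat on $H$. There is also a secondary problem: your recursion on $L$ yields $\tilde w(y)\le 4\delta$, so $w(y)\le 2\delta+\gamma\cdot 4\delta$ can reach $6\delta$, not $4\delta$; and vertices landing in the second bullet of the recursive call need not satisfy $\tilde w(N^-_L(y))\ge 1/2$, so your identity is unavailable for them even though $w(y)=w_0(y)+\gamma\tilde w(y)$ may still be below $\delta$.

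The paper takes a different route that sidesteps both issues. It does not cap and redistribute; instead it \emph{averages} $\lfloor 1/\delta\rfloor$ distributions $w_1,\dots,w_{\lfloor 1/\delta\rfloor}$, where $w_i$ is obtained from Lemma~\ref{lemma:at_least_half} on $D[V\setminus R_{i-1}]$ and $R_{i-1}=\{x:\sum_{j<i}w_j(x)\ge 1\}$ is the set of vertices whose \emph{accumulated} weight has already reached~$1$. The point is that a vertex $x\notin R_{\lfloor 1/\delta\rfloor}$ never gets removed, so every $w_i$ individually gives $w_i(N^-(x))\ge 1/2$ and the average inherits this exactly; no mass is ever lost from $N^-(x)$. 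Meanwhile any vertex can be ``active'' (i.e., receive nonzero $w_i$) for at most two rounds after its running sum first exceeds $0$ past $1$, which caps the total at $2$ and hence $w(x)\le 2/\lfloor 1/\delta\rfloor\le 4\delta$. The ``extra trick'' over Bousquet--Lochet--Thomass\'e is this repeated application with accumulating removal, not a single redistribution step.
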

  Note that the second condition holds for at most $1/\delta$ vertices.

\begin{proof}
We define a sequence of probability distributions $w_1,\dots,w_{\floor{1/\delta}}$ and a sequence of subsets of $V$ called $R_1,\dots,R_{\floor{1/\delta}}$. Let $w_1$ be a probability distribution given by Lemma \ref{lemma:at_least_half}, that is  $w_1(N^-(x)) \ge 1/2$ for all $x\in V$.  Let $R_1=\emptyset$ if $w_1(x)<1$ for every $x\in V$ and let $R_1=\{x\}$ if $w_1$ is concentrated on a single vertex $x\in V$. 

 For $i>1$ we obtain $w_i$ by applying Lemma \ref{lemma:at_least_half} for the induced complete multidigraph $D[V\setminus R_{i-1}]$. That is, $w_i$ is a probability distribution such that $w_i(N^-(x)) \ge 1/2$ for all $x\in V\setminus R_{i-1}$ and $w_i(x)=0$ for $x\in R_{i-1}$. Let $R_i=\{x\in V | \sum\limits_{j=1}^{i} w_j(x)\ge 1\}$.
 
 To use Lemma \ref{lemma:at_least_half}, we need to check that $V\setminus R_{i-1}$ is nonempty. If $V\setminus R_{i-1}$ is empty, then by the definition of $R_{i-1}$ we have $\sum\limits_{j=1}^{i-1}w_j(x)\ge 1$ for each $x\in V$. Hence $i-1=\sum\limits_{j=1}^{i-1}w_j(V)=\sum\limits_{x\in V}\sum\limits_{j=1}^{i-1}w_j(x)\ge |V|\cdot 1$.  Since $i-1< \frac{1}{\delta}$, this contradicts the $|V|>\frac{1}{\delta}$ assumption of the lemma.

 Let $w=\frac{1}{\floor{1/\delta}}\sum\limits_{i=1}^{\floor{1/\delta}}w_i$. Clearly, this is a probability distribution on $V$.  We have to check the correctness of $w$. The first condition holds for vertices not in $R_{\floor{1/\delta}}$ and the second one for the vertices of $R_{\floor{1/\delta}}$.
 
 If $x\in V\setminus R_{\floor{1/\delta}}$, then \begin{equation*}w(N^-(x))=\frac{1}{\floor{1/\delta}}\sum\limits_{i=1}^{\floor{1/\delta}}w_i(N^-(x))\ge \frac{1}{\floor{1/\delta}} \floor{1/\delta}\frac{1}{2}=\frac{1}{2}.\end{equation*} 
 
Since $x\notin R_{\floor{1/\delta}}$ and $0<\delta<1$, we have $w(x)\le \frac{1}{\floor{1/\delta}}\cdot 1 \le 2\delta$. 
 
 In the other case, when $x\in R_{\floor{1/\delta}}$, then by definition $w(x)\ge\frac{1}{\floor{1/\delta}}\sum\limits_{i=1}^{\floor{1/\delta}}w_i(x)\ge\frac{1}{\floor{1/\delta}}\ge \delta$. On the other hand, if $j$ is the smallest number for which $x\in R_j$, then $\sum\limits_{i=1}^{j-1}w_i(x)\le 1$ and $w_k(x)=0$ for $k>j$. Therefore,
 \begin{equation*}w(x)= \frac{1}{\floor{1/\delta}}\sum\limits_{i=1}^{\floor{1/\delta}}w_i(x)= \frac{1}{\floor{1/\delta}}\sum\limits_{i=1}^{j-1}w_i(x)+\frac{1}{\floor{1/\delta}} w_j(x)\le 2\delta+2\delta=4\delta.\end{equation*}
\end{proof}

An easy consequence of Lemma \ref{lemma:prob_dist} is the following. 

\begin{lemma}\label{lemma:good}
 Let $0<\delta<1$ be fixed  and let  $D=(V,A)$ be a complete multidigraph on at least $1/\delta$ vertices whose arc set is the union of $k$ quasi orders. Then there exists a probability distribution $w$ on $V$ and a partition of $V$ into sets $T_1, T_2,\dots , T_k, R$ such that for every $i\in[k]$ and $x \in T_i$, we have $w(x)\le 2\delta$ and $w(N^-_i(x)) \ge 1/2k$ and for every $x\in R$ we have $\delta \le w(x)\le 4\delta$.
\end{lemma}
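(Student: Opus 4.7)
The plan is to obtain the distribution $w$ directly from Lemma \ref{lemma:prob_dist} and then use a pigeonhole argument on the $k$ quasi orders to define the partition.

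First, apply Lemma \ref{lemma:prob_dist} to $D=(V,A)$ with the given $\delta$, which is applicable because $|V|>1/\delta$. This yields a probability distribution $w$ on $V$ such that every vertex $x\in V$ falls into one of two cases: either (i) $w(x)\le 2\delta$ and $w(N^-(x))\ge 1/2$, or (ii) $\delta\le w(x)\le 4\delta$. Put every vertex of type (ii) into $R$; this already satisfies the condition on $R$ demanded by the lemma.

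For a vertex $x$ of type (i), observe that because $A=\prec_1\cup\cdots\cup\prec_k$, every arc into $x$ belongs to some $\prec_i$, and since $x\in N^-_i(x)$ for every $i$, we have $N^-(x)=\bigcup_{i=1}^{k}N^-_i(x)$. By subadditivity of $w$,
\[
\sum_{i=1}^{k} w(N^-_i(x))\;\ge\; w\!\left(\bigcup_{i=1}^{k} N^-_i(x)\right)\;=\;w(N^-(x))\;\ge\;\tfrac12.
\]
Hence by pigeonhole there exists some index $i\in[k]$ with $w(N^-_i(x))\ge 1/(2k)$. Place $x$ in $T_i$ for one such $i$ (breaking ties arbitrarily). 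Since type (i) already guarantees $w(x)\le 2\delta$, the requirement on $T_i$ is met.

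The sets $T_1,\dots,T_k,R$ defined this way form a partition of $V$ and, together with $w$, satisfy all the conclusions of the lemma. There is no real obstacle here: the only content beyond Lemma \ref{lemma:prob_dist} is the trivial averaging step that distributes the in-weight among the $k$ quasi orders. The reason to record the statement as a separate lemma is to package the combination of (a) a bounded-max-weight distribution and (b) a quasi-order-indexed partition in the form that will be used in the remainder of the proof of Theorem \ref{thm:multi_essw_new}.
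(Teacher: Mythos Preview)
Your proof is correct and follows essentially the same approach as the paper: apply Lemma~\ref{lemma:prob_dist}, put the type-(ii) vertices into $R$, and for each type-(i) vertex use the pigeonhole inequality $\sum_i w(N_i^-(x))\ge w(N^-(x))\ge 1/2$ to find an index $i$ with $w(N_i^-(x))\ge 1/(2k)$. The only cosmetic difference is that the paper first defines the (possibly overlapping) sets $T_i'=\{x: w(N_i^-(x))\ge 1/(2k)\}$ and then makes them disjoint, whereas you assign each vertex to a single $T_i$ directly; your formulation is in fact slightly cleaner, since it makes explicit that only type-(i) vertices land in the $T_i$'s and hence the bound $w(x)\le 2\delta$ is immediate.
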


\begin{proof}
Take $w$ according to Lemma \ref{lemma:prob_dist}. For every $i\in [k]$, let $T_i'=\{x\in V|w(N^-_i(x))\ge 1/2k\}$ and let $R$ be the rest of the vertices. As $\sum_{i=1}^k w(N^-_i(x))\ge w(N^-(x)$, the sets $T_i'$ cover those vertices that satisfy the first property in Lemma \ref{lemma:prob_dist}. By making the $T_i$-s disjoint, we can obtain a partition with the required properties.
\end{proof}

\begin{proof}[Proof of Theorem \ref{thm:multi_essw_new}]
Fix $0< \delta<1$, we will choose its value later. Let $\mathcal{I}$ denote the set of sequences of length at most $k+1$ whose terms are from $[k]$, including the empty set. In other words $\mathcal{I}=\{\emptyset\}\cup\bigcup\limits_{l=1}^{k+1} [k]^l$. 

For a complete multidigraph $D=(V,A)$  we are going to define two systems of subsets $\{T_{j_1,\dots, j_i}\}_{(j_1,\dots, j_i)\in Ind_1}$ and $\{R_{j_1,\dots, j_i}\}_{(j_1,\dots, j_i)\in Ind_2}$ of the vertex set  for some index sets $Ind_1,Ind_2\subset \mathcal{I}$. The process that defines these subsets is similar to a tree traversal algorithm. The root node corresponds to $V$ and the children of a given node in the tree will correspond to a partition of the node. Therefore in the end the leaves of the tree provide a partition of the vertex set.  For each $T_{j_1,\dots, j_i}$ we also define a probability distribution $w_{j_1,\dots, j_i}$ which is concentrated on $T_{j_1,\dots, j_i}$.

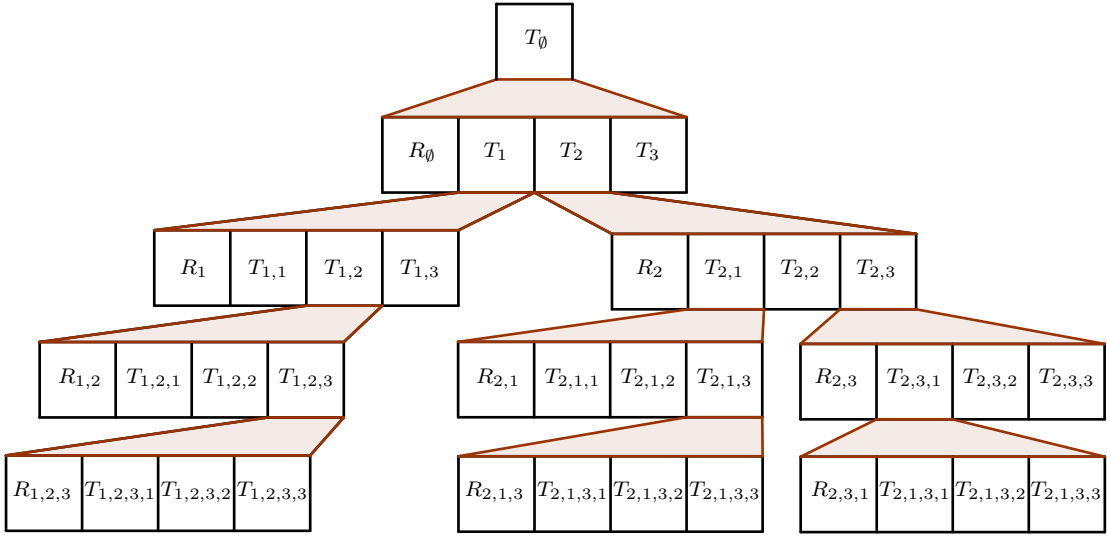
\begin{figure}[!ht]
    \centering
    \definecolor{zzttqq}{rgb}{0.6,0.2,0.}   
    \scalebox{.8}{
    \begin{tikzpicture}[line cap=round,line join=round,>=triangle 45,x=1.0cm,y=1.0cm]
    \clip(3.8304248678496586,4.360255015053259) rectangle (18.591604292782666,11.912986761629789);
    \fill[line width=\lw,color=zzttqq,fill=zzttqq,fill opacity=0.10000000149011612] (6.,8.5) -- (10.,8.5) -- (11.,9.) -- (10.,9.) -- cycle;
    \fill[line width=\lw,color=zzttqq,fill=zzttqq,fill opacity=0.10000000149011612] (12.01965123557056,8.451978418122902) -- (11.,9.) -- (12.,9.) -- (16.01965123557056,8.451978418122902) -- cycle;
    \fill[line width=\lw,color=zzttqq,fill=zzttqq,fill opacity=0.10000000149011612] (7.492555243331861,6.017748862438395) -- (4.050752571492047,5.514073585046604) -- (8.050752571492044,5.514073585046604) -- (8.49255524333186,6.017748862438395) -- cycle;
    \fill[line width=\lw,color=zzttqq,fill=zzttqq,fill opacity=0.10000000149011612] (4.49255524333186,7.017748862438395) -- (8.,7.5) -- (9.,7.5) -- (8.49255524333186,7.017748862438395) -- cycle;
    \fill[line width=\lw,color=zzttqq,fill=zzttqq,fill opacity=0.10000000149011612] (10.5,10.5) -- (9.,10.) -- (13.,10.) -- (11.5,10.5) -- cycle;
    \fill[line width=\lw,color=zzttqq,fill=zzttqq,fill opacity=0.10000000149011612] (13.01965123557056,7.4519784181229) -- (9.996168208507939,7.020411389185139) -- (13.996168208507939,7.020411389185139) -- (14.01965123557056,7.4519784181229) -- cycle;
    \fill[line width=\lw,color=zzttqq,fill=zzttqq,fill opacity=0.10000000149011612] (15.01965123557056,7.4519784181229) -- (14.498557327875192,6.992919843176194) -- (18.498557327875194,6.992919843176194) -- (16.01965123557056,7.4519784181229) -- cycle;
    \fill[line width=\lw,color=zzttqq,fill=zzttqq,fill opacity=0.10000000149011612] (12.996168208507939,6.020411389185139) -- (10.000773758264039,5.502749215070639) -- (14.000773758264039,5.502749215070639) -- (13.996168208507939,6.020411389185139) -- cycle;
    \fill[line width=\lw,color=zzttqq,fill=zzttqq,fill opacity=0.10000000149011612] (15.498557327875192,5.992919843176194) -- (14.501495044177325,5.497236961707589) -- (18.501495044177325,5.497236961707589) -- (16.498557327875194,5.992919843176194) -- cycle;
    \draw [line width=\lw] (9.,10.)-- (9.,9.);
    \draw [line width=\lw] (9.,9.)-- (13.,9.);
    \draw [line width=\lw] (13.,9.)-- (13.,10.);
    \draw [line width=\lw] (13.,10.)-- (9.,10.);
    \draw [line width=\lw] (10.,10.)-- (10.,9.);
    \draw [line width=\lw] (11.,10.)-- (11.,9.);
    \draw [line width=\lw] (12.,10.)-- (12.,9.);
    \draw [line width=\lw] (10.,9.)-- (6.,8.5);
    \draw [line width=\lw] (11.,9.)-- (10.,8.5);
    \draw [line width=\lw] (6.,8.5)-- (10.,8.5);
    \draw [line width=\lw] (10.,8.5)-- (10.,7.5);
    \draw [line width=\lw] (10.,7.5)-- (6.,7.5);
    \draw [line width=\lw] (6.,7.5)-- (6.,8.5);
    \draw [line width=\lw] (7.,8.5)-- (7.,7.5);
    \draw [line width=\lw] (8.,8.5)-- (8.,7.5);
    \draw [line width=\lw] (9.,8.5)-- (9.,7.5);
    \draw [line width=\lw] (11.,9.)-- (12.01965123557056,8.451978418122902);
    \draw [line width=\lw] (12.01965123557056,8.451978418122902)-- (12.01965123557056,7.4519784181229);
    \draw [line width=\lw] (12.,9.)-- (16.01965123557056,8.451978418122902);
    \draw [line width=\lw] (16.01965123557056,8.451978418122902)-- (16.01965123557056,7.4519784181229);
    \draw [line width=\lw] (16.01965123557056,7.4519784181229)-- (12.01965123557056,7.4519784181229);
    \draw [line width=\lw] (12.01965123557056,8.451978418122902)-- (16.01965123557056,8.451978418122902);
    \draw [line width=\lw] (13.01965123557056,8.451978418122902)-- (13.01965123557056,7.4519784181229);
    \draw [line width=\lw] (14.01965123557056,8.451978418122902)-- (14.01965123557056,7.4519784181229);
    \draw [line width=\lw] (15.01965123557056,8.451978418122902)-- (15.01965123557056,7.4519784181229);
    \draw (9.2,9.8) node[anchor=north west] {\scriptsize $R_\emptyset$};
    \draw (11.2,9.8) node[anchor=north west] {\scriptsize $T_2$};
    \draw (7.1,8.25) node[anchor=north west] {\scriptsize $T_{1,1}$};
    \draw (8.1,8.25) node[anchor=north west] {\scriptsize $T_{1,2}$};
    \draw (12.2,8.25) node[anchor=north west] {\scriptsize $R_2$};
    \draw (15.1,8.25) node[anchor=north west] {\scriptsize $T_{2,3}$};
    \draw (14.1,8.25) node[anchor=north west] {\scriptsize $T_{2,2}$};
    \draw (13.1,8.25) node[anchor=north west] {\scriptsize $T_{2,1}$};
    \draw (9.1,8.25) node[anchor=north west] {\scriptsize $T_{1,3}$};
    \draw (12.2,9.8) node[anchor=north west] {\scriptsize $T_3$};
    \draw (6.2,8.25) node[anchor=north west] {\scriptsize $R_1$};
    \draw (10.2,9.8) node[anchor=north west] {\scriptsize $T_1$};
    \draw [line width=\lw,color=zzttqq] (6.,8.5)-- (10.,8.5);
    \draw [line width=\lw,color=zzttqq] (10.,8.5)-- (11.,9.);
    \draw [line width=\lw,color=zzttqq] (11.,9.)-- (10.,9.);
    \draw [line width=\lw,color=zzttqq] (10.,9.)-- (6.,8.5);
    \draw [line width=\lw,color=zzttqq] (12.01965123557056,8.451978418122902)-- (11.,9.);
    \draw [line width=\lw,color=zzttqq] (11.,9.)-- (12.,9.);
    \draw [line width=\lw,color=zzttqq] (12.,9.)-- (16.01965123557056,8.451978418122902);
    \draw [line width=\lw,color=zzttqq] (16.01965123557056,8.451978418122902)-- (12.01965123557056,8.451978418122902);
    \draw [line width=\lw] (4.050752571492047,4.514073585046604)-- (4.050752571492047,5.514073585046604);
    \draw [line width=\lw] (4.050752571492047,5.514073585046604)-- (8.050752571492044,5.514073585046604);
    \draw [line width=\lw] (8.050752571492044,5.514073585046604)-- (8.050752571492044,4.514073585046604);
    \draw [line width=\lw] (4.050752571492047,4.514073585046604)-- (8.050752571492044,4.514073585046604);
    \draw [line width=\lw] (5.050752571492047,5.514073585046604)-- (5.050752571492047,4.514073585046604);
    \draw [line width=\lw] (6.050752571492044,5.514073585046604)-- (6.050752571492044,4.514073585046604);
    \draw [line width=\lw] (7.050752571492042,5.514073585046604)-- (7.050752571492042,4.514073585046604);
    \draw (4.0,5.3) node[anchor=north west] {\scriptsize $R_{1,2,3}$};
    \draw (4.95,5.3) node[anchor=north west] {\scriptsize $T_{1,2,3,1}$};
    \draw (5.95,5.3) node[anchor=north west] {\scriptsize $T_{1,2,3,2}$};
    \draw (6.95,5.3) node[anchor=north west] {\scriptsize $T_{1,2,3,3}$};
    \draw [line width=\lw] (4.49255524333186,6.017748862438395)-- (4.49255524333186,7.017748862438395);
    \draw [line width=\lw] (4.49255524333186,7.017748862438395)-- (8.49255524333186,7.017748862438395);
    \draw [line width=\lw] (8.49255524333186,7.017748862438395)-- (8.49255524333186,6.017748862438395);
    \draw [line width=\lw] (4.49255524333186,6.017748862438395)-- (8.49255524333186,6.017748862438395);
    \draw [line width=\lw] (5.49255524333186,7.017748862438395)-- (5.49255524333186,6.017748862438395);
    \draw [line width=\lw] (6.492555243331861,7.017748862438395)-- (6.492555243331861,6.017748862438395);
    \draw [line width=\lw] (7.492555243331861,7.017748862438395)-- (7.492555243331861,6.017748862438395);
    \draw (4.6,6.8) node[anchor=north west] {\scriptsize $R_{1,2}$};
    \draw (5.5,6.8) node[anchor=north west] {\scriptsize $T_{1,2,1}$};
    \draw (6.5,6.8) node[anchor=north west] {\scriptsize $T_{1,2,2}$};
    \draw (7.5,6.8) node[anchor=north west] {\scriptsize $T_{1,2,3}$};
    \draw [line width=\lw] (4.49255524333186,7.017748862438395)-- (8.,7.5);
    \draw [line width=\lw] (9.,7.5)-- (8.49255524333186,7.017748862438395);
    \draw [line width=\lw] (7.492555243331861,6.017748862438395)-- (4.050752571492047,5.514073585046604);
    \draw [line width=\lw] (8.49255524333186,6.017748862438395)-- (8.050752571492044,5.514073585046604);
    \draw [line width=\lw,color=zzttqq] (7.492555243331861,6.017748862438395)-- (4.050752571492047,5.514073585046604);
    \draw [line width=\lw,color=zzttqq] (4.050752571492047,5.514073585046604)-- (8.050752571492044,5.514073585046604);
    \draw [line width=\lw,color=zzttqq] (8.050752571492044,5.514073585046604)-- (8.49255524333186,6.017748862438395);
    \draw [line width=\lw,color=zzttqq] (8.49255524333186,6.017748862438395)-- (7.492555243331861,6.017748862438395);
    \draw [line width=\lw,color=zzttqq] (4.49255524333186,7.017748862438395)-- (8.,7.5);
    \draw [line width=\lw,color=zzttqq] (8.,7.5)-- (9.,7.5);
    \draw [line width=\lw,color=zzttqq] (9.,7.5)-- (8.49255524333186,7.017748862438395);
    \draw [line width=\lw,color=zzttqq] (8.49255524333186,7.017748862438395)-- (4.49255524333186,7.017748862438395);
    \draw [line width=\lw,color=zzttqq] (10.5,10.5)-- (9.,10.);
    \draw [line width=\lw,color=zzttqq] (9.,10.)-- (13.,10.);
    \draw [line width=\lw,color=zzttqq] (13.,10.)-- (11.5,10.5);
    \draw [line width=\lw,color=zzttqq] (11.5,10.5)-- (10.5,10.5);
    \draw [line width=\lw] (10.5,10.5)-- (10.5,11.5);
    \draw [line width=\lw] (10.5,11.5)-- (11.5,11.5);
    \draw [line width=\lw] (11.5,11.5)-- (11.5,10.5);
    \draw (10.737623986513645,11.3) node[anchor=north west] {\scriptsize  $T_{\emptyset}$};
    \draw [line width=\lw] (14.498557327875192,5.992919843176194)-- (14.498557327875192,6.992919843176194);
    \draw [line width=\lw] (14.498557327875192,6.992919843176194)-- (18.498557327875194,6.992919843176194);
    \draw [line width=\lw] (18.498557327875194,6.992919843176194)-- (18.498557327875194,5.992919843176194);
    \draw [line width=\lw] (14.498557327875192,5.992919843176194)-- (18.498557327875194,5.992919843176194);
    \draw [line width=\lw] (15.498557327875192,6.992919843176194)-- (15.498557327875192,5.992919843176194);
    \draw [line width=\lw] (16.498557327875194,6.992919843176194)-- (16.498557327875194,5.992919843176194);
    \draw [line width=\lw] (17.498557327875194,6.992919843176194)-- (17.498557327875194,5.992919843176194);
    \draw (14.55,6.8) node[anchor=north west] {\scriptsize $R_{2,3}$};
    \draw (15.5,6.8) node[anchor=north west] {\scriptsize $T_{2,3,1}$};
    \draw (16.5,6.8) node[anchor=north west] {\scriptsize $T_{2,3,2}$};
    \draw (17.5,6.8) node[anchor=north west] {\scriptsize $T_{2,3,3}$};
    \draw [line width=\lw] (9.996168208507939,6.020411389185139)-- (9.996168208507939,7.020411389185139);
    \draw [line width=\lw] (9.996168208507939,7.020411389185139)-- (13.996168208507939,7.020411389185139);
    \draw [line width=\lw] (13.996168208507939,7.020411389185139)-- (13.996168208507939,6.020411389185139);
    \draw [line width=\lw] (9.996168208507939,6.020411389185139)-- (13.996168208507939,6.020411389185139);
    \draw [line width=\lw] (10.996168208507939,7.020411389185139)-- (10.996168208507939,6.020411389185139);
    \draw [line width=\lw] (11.996168208507939,7.020411389185139)-- (11.996168208507939,6.020411389185139);
    \draw [line width=\lw] (12.996168208507939,7.020411389185139)-- (12.996168208507939,6.020411389185139);
    \draw (10.1,6.8) node[anchor=north west] {\scriptsize $R_{2,1}$};
    \draw (11.0,6.8) node[anchor=north west] {\scriptsize $T_{2,1,1}$};
    \draw (12.0,6.8) node[anchor=north west] {\scriptsize $T_{2,1,2}$};
    \draw (13.0,6.8) node[anchor=north west] {\scriptsize $T_{2,1,3}$};
    \draw [line width=\lw,color=zzttqq] (13.01965123557056,7.4519784181229)-- (9.996168208507939,7.020411389185139);
    \draw [line width=\lw,color=zzttqq] (9.996168208507939,7.020411389185139)-- (13.996168208507939,7.020411389185139);
    \draw [line width=\lw,color=zzttqq] (13.996168208507939,7.020411389185139)-- (14.01965123557056,7.4519784181229);
    \draw [line width=\lw,color=zzttqq] (14.01965123557056,7.4519784181229)-- (13.01965123557056,7.4519784181229);
    \draw [line width=\lw,color=zzttqq] (15.01965123557056,7.4519784181229)-- (14.498557327875192,6.992919843176194);
    \draw [line width=\lw,color=zzttqq] (14.498557327875192,6.992919843176194)-- (18.498557327875194,6.992919843176194);
    \draw [line width=\lw,color=zzttqq] (18.498557327875194,6.992919843176194)-- (16.01965123557056,7.4519784181229);
    \draw [line width=\lw,color=zzttqq] (16.01965123557056,7.4519784181229)-- (15.01965123557056,7.4519784181229);
    \draw [line width=\lw] (14.501495044177325,4.497236961707589)-- (14.501495044177325,5.497236961707589);
    \draw [line width=\lw] (14.501495044177325,5.497236961707589)-- (18.501495044177325,5.497236961707589);
    \draw [line width=\lw] (18.501495044177325,5.497236961707589)-- (18.501495044177325,4.497236961707589);
    \draw [line width=\lw] (14.501495044177325,4.497236961707589)-- (18.501495044177325,4.497236961707589);
    \draw [line width=\lw] (15.501495044177325,5.497236961707589)-- (15.501495044177325,4.497236961707589);
    \draw [line width=\lw] (16.501495044177325,5.497236961707589)-- (16.501495044177325,4.497236961707589);
    \draw [line width=\lw] (17.501495044177325,5.497236961707589)-- (17.501495044177325,4.497236961707589);
    \draw (14.5,5.3) node[anchor=north west] {\scriptsize $R_{2,3,1}$};
    \draw (15.4,5.3) node[anchor=north west] {\scriptsize $T_{2,1,3,1}$};
    \draw (16.4,5.3) node[anchor=north west] {\scriptsize $T_{2,1,3,2}$};
    \draw (17.39,5.3) node[anchor=north west] {\scriptsize $T_{2,1,3,3}$};
    \draw [line width=\lw] (10.000773758264039,4.502749215070639)-- (10.000773758264039,5.502749215070639);
    \draw [line width=\lw] (10.000773758264039,5.502749215070639)-- (14.000773758264039,5.502749215070639);
    \draw [line width=\lw] (14.000773758264039,5.502749215070639)-- (14.000773758264039,4.502749215070639);
    \draw [line width=\lw] (10.000773758264039,4.502749215070639)-- (14.000773758264039,4.502749215070639);
    \draw [line width=\lw] (11.000773758264039,5.502749215070639)-- (11.000773758264039,4.502749215070639);
    \draw [line width=\lw] (12.000773758264039,5.502749215070639)-- (12.000773758264039,4.502749215070639);
    \draw [line width=\lw] (13.000773758264039,5.502749215070639)-- (13.000773758264039,4.502749215070639);
    \draw (9.95,5.3) node[anchor=north west] {\scriptsize $R_{2,1,3}$};
    \draw (10.9,5.3) node[anchor=north west] {\scriptsize $T_{2,1,3,1}$};
    \draw (11.9,5.3) node[anchor=north west] {\scriptsize $T_{2,1,3,2}$};
    \draw (12.9,5.3) node[anchor=north west] {\scriptsize $T_{2,1,3,3}$};
    \draw [line width=\lw,color=zzttqq] (12.996168208507939,6.020411389185139)-- (10.000773758264039,5.502749215070639);
    \draw [line width=\lw,color=zzttqq] (10.000773758264039,5.502749215070639)-- (14.000773758264039,5.502749215070639);
    \draw [line width=\lw,color=zzttqq] (14.000773758264039,5.502749215070639)-- (13.996168208507939,6.020411389185139);
    \draw [line width=\lw,color=zzttqq] (13.996168208507939,6.020411389185139)-- (12.996168208507939,6.020411389185139);
    \draw [line width=\lw,color=zzttqq] (15.498557327875192,5.992919843176194)-- (14.501495044177325,5.497236961707589);
    \draw [line width=\lw,color=zzttqq] (14.501495044177325,5.497236961707589)-- (18.501495044177325,5.497236961707589);
    \draw [line width=\lw,color=zzttqq] (18.501495044177325,5.497236961707589)-- (16.498557327875194,5.992919843176194);
    \draw [line width=\lw,color=zzttqq] (16.498557327875194,5.992919843176194)-- (15.498557327875192,5.992919843176194);
    \end{tikzpicture}}
    \caption{The outcome of the partition process for $k=3$ assuming that the size of $T_3$ and $T_{1,3}$ is less than $1/\delta$.}
    \label{fig:erdos}
\end{figure}

We start by setting $T_\emptyset=V$ and we apply Lemma \ref{lemma:good} for $T_\emptyset$ and our fixed $\delta$ to obtain $T_1, T_2, \dots , T_k$ and $R_\emptyset$ together with a probability distribution $w_\emptyset$. 

Then, as long as possible, we do the following step. We pick a previously not selected $T_{j_1,\dots, j_i}$ from the already defined ones such that $j_1,\dots,j_{i}$ are pairwise distinct, and $|T_{j_1,\dots, j_i}|>\frac{1}{\delta}$. For such a $T_{j_1,\dots, j_i}$ we apply Lemma $\ref{lemma:good}$ to obtain the partition $T_{j_1,\dots, j_i,1},\dots,T_{j_1,\dots, j_i,k},R_{j_1,\dots, j_i}$ and a probability distribution $w_{j_1,\dots, j_i}$. This process terminates in (strictly) less than $|\mathcal{I}|\le k^{k+2}$ steps as no index sequence can be longer than $k+1$. Let $Ind_1$ and $Ind_2$ denote the index sets for the $T_{j_1,\dots, j_i}$-s and $R_{j_1,\dots, j_i}$-s. From Lemma \ref{lemma:good} we know the following:

\begin{itemize}
    \item $|R_{j_1,\dots, j_i}|\le \frac{1}{\delta}$ for each $(j_1,\dots,j_i)\in Ind_2$.
    \item For every $r\in [k]$ and $x \in T_{j_1,\dots, j_i,r}$ we have $w_{j_1,\dots, j_i}(N^-_r(x)) \ge 1/2k$.
    \item $w_{j_1,\dots, j_i}(x)\le 4\delta$ for every $x\in V$ and every $(j_1,\dots,j_i)\in Ind_2$.
\end{itemize}

When the process halts, we define a partition of $V$ by taking every set corresponding to a leaf, i.e., every set of the following three kinds:
\begin{enumerate}
    \item $R_{j_1,\dots,j_i}$-s
    \item  $T_{j_1,\dots,j_i}$-s that have fewer elements than $\frac{1}{\delta}$
    \item $T_{j_1,\dots,j_i}$-s where $j_i=j_r$ for some $r<i$
\end{enumerate}
    
 These sets are clearly disjoint, and they cover the vertex set, since the process halted. Let $R$ denote the union of the $R_{j_1,\dots,j_i}$-s, let  $T_{small}$ denote the union the $T_{j_1,\dots,j_i}$-s that have fewer elements than $1/\delta$ and finally let $T_{rep}$ denote the union of the $T_{j_1,\dots,j_i}$-s where $j_i=j_r$ for some $r<i$.

We dominate each part of the partition. Since each $R_{j_1,\dots,j_i}$ has fewer than $\frac{1}{\delta}$ elements, we have $|R|\le  \frac{1}{\delta}|Ind_2|\le \frac{1}{\delta}k^{k+2}$. Similarly $|T_{small}|\le \frac{1}{\delta}k^{k+2}$. We will simply put the vertices of $R\cup T_{small}$ into $S_1^1$ but we will not use them to dominate any point. Later, when we pick $\delta$, we will see that $\frac{2}{\delta}k^{k+2}$ is upper bounded by a function of $k$ and $l$, so we have not added too many vertices here. 

It remains to dominate the vertices in $T_{rep}$. Let $I_{rep}\subset Ind_1$ be the index set of those $T_{j_1,\dots,j_i}$-s where $j_i=j_r$ for some $r<i$. 

Let us fix $(j_1,\dots,j_i)\in I_{rep}$ and $o\in[l]$. First we explain how to chose a set $V_{j_1,\dots,j_i}^o\subset V$ such that $V_{j_1,\dots,j_i}^o$ dominates $T_{j_1,\dots,j_i}$ and then we will argue that this can be done simultaneously for every $(j_1,\dots,j_i)\in I_{rep}$ and $o\in[l]$ in such a way that the $V_{j_1,\dots,j_i}^o$-s are disjoint. 

For a fixed $(j_1,\dots,j_i)\in I_{rep}$, consider $T_{j_1,\dots,j_i}\subset T_{j_1,\dots,j_{i-1}}\subset T_{j_1,\dots,j_{r-1}}$ such that $j_i=j_r$. The idea is that if we appropriately pick $V_{j_1,\dots,j_i}^o\subset T_{j_1,\dots,j_{r-1}}$, such that $V_{j_1,\dots,j_i}^o$ dominates a large fraction of $T_{j_1,\dots,j_{i-1}}$ through the edges of $\prec_{j_r}$, then that large fraction dominates entirely $T_{j_1,\dots,j_{i}}$, and since $j_r=j_i$, this happens through the edges of $\prec_{j_i}$ again. By transitivity, this implies that $T_{j_1,\dots,j_{i}}$ is dominated by $V_{j_1,\dots,j_i}^o$. To put this idea into precise terms note that for each $x\in T_{j_1,\dots,j_{i}}$ we have $w_{j_1,\dots,j_{i-1}}(N_{j_i}^-(x)) \ge \frac{1}{2k}$. Hence, if $w_{j_1,\dots,j_{i-1}}(N^+_{j_i}(V_{j_1,\dots,j_{i}}^o))> 1-\frac{1}{2k}$, then $T_{j_1,\dots,j_{i}}$ is dominated by $V_{j_1,\dots,j_{i}}^o$.  

Let $0<\varepsilon<1$ be fixed, we will choose its value later. Let $g(\varepsilon)=\floor{\frac{\ln(\varepsilon)}{\ln(1-\frac{1}{2k})}}+1$  and let  $V_{j_1,\dots,j_i}^o$ be a multiset of $g(\varepsilon)$ elements picked independently at random from $T_{j_1,\dots,j_{r-1}}$ according to the distribution $w_{j_1,\dots,j_{r-1}}$. For every vertex $x \in T_{j_1,\dots,j_{i-1}}$, $\mathbb{P}(x\in N_{j_i}^+(V_{j_1,\dots,j_{i}}^o)) \ge 1- (1- \frac{1}{2k})^{g(\varepsilon)}\ge 1- \varepsilon$. Therefore, by linearity of expectation,
\begin{equation*}\mathbb{E}(w_{j_1,\dots,j_{i-1}}(N^+_{j_i}(V_{j_1,\dots,j_{i}}^o)))\ge\sum_{x\in T_{j_1,\dots,j_{i-1}}}w_{j_1,\dots,j_{i-1}}(x)\cdot(1-\varepsilon)\ge 1-\varepsilon.\end{equation*}

This already shows that if $\varepsilon\le 1/2k$, there is a positive probability that $V_{j_1,\dots,j_{i}}^o$ dominates $T_{j_1,\dots,j_{i}}$. But we will need a smaller $\varepsilon$ for disjointness. 

 Let $X$ be the random variable $\sum\limits_{(j_1,\dots,j_{i})\in I_{rep},o\in [l]}w_{j_1,\dots,j_{i-1}}(N^+_{j_i}(V_{j_1,\dots,j_{i}}^o)) $.
 By our reasoning above,
$\mathbb{E}(X)\ge l|I_{rep}|(1-\varepsilon).$

Let $Y$ be the indicator variable of the event that the sets $V_{j_1,\dots,j_{i}}^o$ for $(j_1,\dots,j_{i})\in I_{rep},o\in [l]$ are pairwise disjoint. Let us consider the random variable $X\cdot Y$. Suppose that $\mathbb{P}(X\cdot Y>l|I_{rep}|-\frac{1}{2k})>0$. This would imply that $Y=1$, i.e., the $V_{j_1,\dots,j_{i}}^o$-s are pairwise disjoint. Since $w_{j_1,\dots,j_{i-1}}(N^+_{j_i}(V_{j_1,\dots,j_{i}}^o))\le 1$ for each $(j_1,\dots,j_i)\in I_{rep}$, it would also imply that $w_{j_1,\dots,j_{i-1}}(N^+_{j_i}(V_{j_1,\dots,j_{i}}^o))> 1-\frac{1}{2k}$ for each $(j_1,\dots,j_i)\in I_{rep}$ and $o\in[l]$.
From here it will be easy to construct the required sets $S_i^j$.

We will show an lower bound on $\mathbb{E}(X\cdot Y)$. Since $Y$ is an indicator variable, we have
\begin{equation*}\mathbb{E}(X\cdot Y)\ge \mathbb{E}(X)-\max(X)\cdot\mathbb{P}(Y=0).\end{equation*}

Clearly $\max(X)\le l|I_{rep}|$. To bound $\mathbb{P}(Y=0)$, note that in total we have picked at most $N=l|I_{rep}|g(\epsilon)\le l k^{k+2}g(\epsilon)$ elements. Every time we picked an element, the probability distribution was smaller than $4\delta$ on each element of $V$. Hence
\begin{equation*}\mathbb{P}(Y=0)\le\binom{N}{2}4\delta\le N^2 2\delta\end{equation*} 

Therefore,
\begin{equation*}\mathbb{E}(X\cdot Y)\ge \mathbb{E}(X)-\max(X)\cdot\mathbb{P}(Y=0)\ge\end{equation*}
\begin{equation*}\ge  l |I_{rep}|(1-\varepsilon)- l |I_{rep}|N^2 2\delta  =l |I_{rep}|(1-\varepsilon-N^2 2\delta)\end{equation*}
Pick $\varepsilon=\frac{1}{4l k^{k+3}}$ and $\delta=\frac{1}{8 l^3 k^{3k+7}g(\varepsilon)^2}$. With this choice  $N^2 2\delta\le \frac{2l^2k^{2k+4}g(\varepsilon)^2}{8l^3 k^{3k+7} g(\varepsilon)^2}=\frac{1}{4l k^{k+3}}$.

Therefore, 
\begin{equation*}\mathbb{E}(X\cdot Y) \ge l |I_{rep}|(1-\varepsilon-N^2 2\delta)\ge l |I_{rep}|(1-\frac{1}{4l k^{k+3}}-\frac{1}{4l k^{k+3}})\end{equation*}
\begin{equation*}\ge  l|I_{rep}|(1-\frac{1}{2k l| I_{rep}|})=l |I_{rep}|-\frac{1}{2k}.\end{equation*}


 Hence, it is possible that $X\cdot Y\ge l |I_{rep}|-\frac{1}{2k}$. In this case the $V_{j_1,\dots,j_{i}}^o$-s are pairwise disjoint and each $T_{j_1,\dots,j_{i}}$ is dominated by $V_{j_1,\dots,j_{i}}^o$ through the edges of $\prec_{j_i}$. 
 
Finally, let $S_i^j=\bigcup\limits_{({j_1,\dots,j_r,i})\in I_{rep}} V_{j_1,\dots,j_r,i}^j$ (be careful that $i$ denotes something else than in the paragraph above). The $S_i^j$-s are also pairwise disjoint. For any vertex  $v\in V\setminus \bigcup\limits_{i,j}S_{i}^j\subset T_{rep}$ there is a $T_{j_1,\dots,j_r,i}$ containing it for some $({j_1,\dots,j_r,i})\in I_{rep}$ (recall that we added $R\cup T_{small}$ to $S_1^1$), hence there is an edge of $\prec_{i}$ from $v$ to a vertex of $S^j_{i}$ for each $j\in[l]$. 

To wrap up the proof, we calculate the size of $\bigcup\limits_{i,j}S_{i}^j$. We placed $R\cup T_{small}$ into $S_1^1$ and we have at most $g(\varepsilon)$ elements in each $V_{j_1,\dots,j_i}^o$. Therefore, we have at most \begin{equation*}\frac{2}{\delta}k^{k+2}+l k^{k+2}g(\varepsilon)\le 2 \cdot{8 l^3 k^{3k+7}(\floor{\frac{\ln(\frac{1}{4l k^{k+3}})}{\ln(1-\frac{1}{2k})}}+1)^2} k^{k+2}+l k^{k+2}(\floor{\frac{\ln(\frac{1}{4l k^{k+3}})}{\ln(1-\frac{1}{2k})}}+1)\end{equation*} elements in $\bigcup\limits_{i,j}S_{i}^j$.
\end{proof}

\section{Open questions}\label{sec:open}

It is a natural question whether there is a universal $m$ that works for all convex bodies in Theorem \ref{thm:general_three_col}, like in Theorem \ref{thm:local_three_col}.
This would follow if we could choose $r$ to be a universal constant.
While the $r$ given by our algorithm can depend on $C$, we can apply an appropriate affine transformation to $C$ before choosing $r$; this does not change the hypergraphs that can be realized with the range space determined by the translates of $C$.
To ensure that properties (A) and (B) are satisfied would require further study of the Illumination conjecture.

Our bound for $m$ is quite large, even for the unit disk, both in Theorems \ref{thm:general_three_col} and \ref{thm:local_three_col}, which is mainly due to the fact that $f(3,2)$ given by Theorem \ref{thm:multi_essw_new} is huge.
It has been conjectured that in Theorem \ref{thm:multi_essw_old} the optimal value is $f(3)=3$, and a similarly small number seems realistic for $f(3,2)$ as well.

While Theorem \ref{thm:general_three_col} closed the last main question left open for primal hypergraphs realizable by translates of planar convex bodies, the respective problem is still open in higher dimensions.
While it is not hard to show that some hypergraphs with high chromatic number often used in constructions can be easily realized by unit balls in $\mathbb{R}^5$, we do not know whether the chromatic number is bounded or not in $\mathbb{R}^3$.
From our Union Lemma (Lemma \ref{lem:combine}) it follows that to establish boundedness, it would be enough to find a polychromatic $k$-coloring for pseudohalfspaces, whatever this word means.



\begin{funding}
The work of DP was partially supported by the ERC Advanced Grant ``ERMiD'' and by the Lend\"ulet program LP2017-19/2017 and the J\'anos Bolyai Research Scholarship of the Hungarian Academy of Sciences, and by the New National Excellence Program \'UNKP-22-5 and by the Thematic Excellence Program TKP2021-NKTA-62 of the National Research, Development and Innovation Office.

The work of GD was partially supported by the ÚNKP-21-3 New National Excellence Program of the
Ministry for Innovation and Technology from the source of the National Research, Development
and Innovation Fund, and by the Lend\"ulet program LP2017-19/2017, and by ERC Advanced Grant ``GeoScape”. 
\end{funding}


\end{document}